\newtheorem{theorem}{Theorem}[section]
\newtheorem{lemma}[theorem]{Lemma}
\theoremstyle{definition}
\newtheorem{definition}[theorem]{Definition}
\newtheorem{example}[theorem]{Example}
\theoremstyle{remark}
\numberwithin{equation}{section}
\begin{document}
	
	\begin{center}
		{\Large \bf Open-loop Pareto-Nash equilibria in multi-objective interval differential games}
	\end{center}

	\vskip 20pt

	\begin{center}
		{{\bf Wen~~Li\footnote{Corresponding author, E-mail:liwenmath@wust.edu.cn
				},
				\quad Du Zou \footnote{zoudumath@wust.edu.cn},
			    \quad Deyi Li \footnote{lideyi@wust.edu.cn},
				\quad Yuqiang Feng 
				\footnote{yqfeng6@126.com}}\\~~
			
			School of Science, Wuhan University of Science and Technology, Wuhan, 430065, China\\
		    }
	\end{center}
	
	\vskip 10pt
	\footnotetext{Research is supported by the Excellent Youth Foundation of Hubei Scientiﬁc Committee (No. 2020CFA079) and the National Natural Science Foundation of China (Nos. 72031009, 12171378).}
	
	\begin{center}
		\begin{minipage}{16cm}
			{\bf Abstract} 
The paper explores $n$-player multi-objective interval differential games, where the terminal payoff function and integral payoff function of players are both interval-vector-valued functions. Firstly, by leveraging the partial order relationship among interval vectors, we establish the concept of (weighted) open-loop Pareto-Nash equilibrium for multi-objective interval differential games and derive two theorems regarding the existence of such equilibria. Secondly, necessary conditions for open-loop Pareto-Nash equilibria in $n$-player interval differential games are derived through constructing Hamilton functions in an interval form and applying the Pontryagin maximum principle. Subsequently, sufficient conditions for their existence are provided by defining a maximization Hamilton function and utilizing its concavity. Finally, a two-player linear quadratic interval differential game is discussed along with a specific calculation method to determine its open-loop Pareto-Nash equilibrium.
			\vskip 10pt{{\bf 2020 Mathematics Subject Classification:} 91A23, 65G40.}
			
			\vskip 10pt{{\bf Keywords: Open-loop Pareto-Nash equilibrium, multi-objective interval differential games, Linear quadratic interval differential games} }
			\vskip 10pt
		\end{minipage}
	\end{center}

\vskip 25pt
\section{\bf Introduction}
\vskip 10pt
Differential games originated in the 1950s as a result of studying the challenges faced by opposing sides in air force confrontations. In 1965, Isaacs\cite{Isaacs} authored the renowned book "Differential Games," which marked a significant milestone in the development of this field. Differential games primarily involve game theories that utilize state equations to describe dynamic systems. These theories include the saddle point equilibrium theory for zero-sum differential games and the Nash equilibrium theory for non-zero-sum differential games. Since the seminal contributions of Isaacs and Starr and Ho\cite{StarrHo}, differential games have played a crucial role in various domains such as economics, military defense, and management\cite{Isaacs,Dockner}. Differential games are widely observed in the domain of control engineering, covering areas such as robust control\cite{James,Limebeer,Mylvaganam}, robotics\cite{Cappello,Mylvaganam,XueZhanWu}, power systems\cite{SaadHan,MeiZhang}, and collaborative control challenges\cite{Semsar-Kazerooni,GaoPetrosyan}.

The robust control problem can be transformed into a saddle point equilibrium or Nash equilibrium problem of differential games, as extensively discussed in the works of Basar and Bernhard\cite{BasarBernhard}, Broek, Engwerda, and Schumacher\cite{Broek}. Therefore, the theory of differential games can be seen as an extension of optimal control. Linear quadratic differential games have attracted significant attention due to their solvability through analytical and numerical methods. Xu and Mizukami\cite{XuMizukami} examined zero-sum linear quadratic differential games for generalized state space systems and established sufficient conditions for the existence of a linear feedback saddle point equilibrium. Basar and Olsder\cite{BasarOlsder} systematically summarized differential games with quadratic performance indicator functions in linear systems, providing necessary conditions for describing game theories such as saddle point equilibrium and Nash equilibrium in general nonlinear systems using the Hamilton-Jacobi-Bellman equation or the Hamilton-Jacobi-Isaacs equation. Huang and Zhao\cite{HuangZhao} investigated switching linear quadratic differential games within a finite range by introducing preference information and auxiliary variables to equivalently transform them into a series of parameterized single-objective optimal problems. They then utilized dynamic programming techniques to construct the Pareto frontier of these switching linear quadratic differential games. Zhukovskiy et al.\cite{Zhukovskiy} developed coefficient criteria based on dynamic programming and small parameter methods to determine the existence of Berge equilibrium and Nash equilibrium in linear quadratic differential games.

Due to the presence of uncertainties in various real-world phenomena and systems, individuals often employ random variables and processes to describe unknown internal and external disturbances and errors. This necessitates the utilization of stochastic differential equations as a dynamic model for these systems, leading to the emergence of stochastic differential games. For further exploration of stochastic differential games, please refer to references\cite{Basar,Hamadene1998,Hamadene1999,HamadeneMu,SunYong}. Similarly, fuzziness represents another form of uncertainty that holds significant importance in reality. In 1965, Zadeh\cite{Zadeh} initially introduced fuzzy sets based on membership functions. To quantify a fuzzy event, Liu and Liu\cite{LiuLiu} introduced the concept of credibility measure in 2002. Building upon his uncertainty theory established in 2004, Liu\cite{Liu2007} further enhanced it. Additionally, Liu\cite{Liu2008} proposed the notion of fuzzy processes. Subsequently, some scholars delved into studying fuzzy differential equations\cite{ChenQin,YouWang}, as well as fuzzy optimal control problems\cite{Zhu,ZhaoZhu,QinBai,BatenKamil}. Drawing from Liu's earlier concept of the fuzzy process presented above\cite{Liu2008}, Hu and Yang investigated the existence of open-loop Pareto-Nash equilibrium for differential games described by fuzzy differential equations.

In practical scenarios where precise numerical values are difficult to obtain due to factors like measurement errors, incomplete information, data fluctuations, and uncertainties; interval numbers serve as a valuable approach for describing these uncertainties. Moore\cite{Moore} introduced the concept of interval numbers and employed interval analysis to analyze functions with interval coefficients. Subsequent studies by researchers focused on exploring the theory and applications of interval analysis (see references\cite{Alefeld,Chalco-Cano,Oppenheimer,Hukuhara,Stefanini}). Oppenheimer and Michel\cite{Oppenheimer} demonstrated that systems utilizing Moore's interval operations cannot form a group since there is no inverse element for nonzero intervals. To overcome this limitation, Hukuhara\cite{Hukuhara} proposed the concept of $H$-difference as an alternative measure for quantifying differences between intervals. However, the $H$ difference does not define the difference between narrow-width and wide-width intervals. Addressing this concern, Stefanni and Bede\cite{Stefanini} introduced the notion of $gH$-difference as an extension of $H$-difference. Building upon this definition, they also introduced the concept of generalized Hukuhara derivative ($gH$-derivative) of the interval-valued function. Currently, $gH$-difference and $gH$-derivative are widely employed in solving problems related to interval optimization (references\cite{Antczak,Ghosh2020,Ghosh2022,Li2023}), interval optimal control\cite{Leal} [46], as well as interval differential equations (references\cite{Stefanini,Lupulescu,TaoZhang}).

Leal et al.\cite{Leal} investigated the optimal control problem involving interval-valued objective functions by utilizing extreme values and generalized Hukuhara differentiability of interval-valued functions to derive necessary and sufficient conditions for optimality. Li et al.\cite{LiLi} explored weak Pareto-Nash equilibrium in generalized interval-valued static multi-objective games with fuzzy mappings, establishing an existence theorem for weak Pareto-Nash equilibrium and examining its applications in multi-objective bi-matrix games. Building upon this prior research, our focus is on dynamic multi-objective differential games. To better capture the uncertainty associated with payoff values in these games, we employ interval-vector-valued functions to represent the payoff function. Consequently, this paper concentrates on a differential game where both the terminal payoff function and integral payoff function are represented as interval-vector-valued functions (referred to as a multi-objective interval differential game), aiming to identify its open-loop Pareto-Nash equilibrium.

The paper is organized as follows: In Section 2, we introduce several fundamental concepts and relevant theorems. Section 3 presents the existence theorems for open-loop Pareto-Nash equilibrium and weighted open-loop Pareto-Nash equilibrium in multi-objective interval differential games, respectively. Moving on to Section 4, our focus shifts to the $n$-person interval differential game and its open-loop Pareto-Nash equilibrium. By employing interval-valued Hamiltonian functions, Pontryagin maximum principle, and exploiting concavity of the maximization Hamiltonian function, we establish sufficient and necessary conditions for the existence of open-loop Pareto-Nash equilibrium. In Section 5, we provide explicit calculations for obtaining the open-loop Pareto-Nash equilibrium of linear quadratic interval differential games. Finally, in Section 6, we conclude this article.

\vskip 25pt
\section{\bf Preliminaries and Terminologies}
\vskip 10pt

This section primarily presents various valuable concepts and significant theorems, such as the partial ordering of interval vectors, $gH$-derivatives for interval-valued functions, integration of interval-vector-valued functions, and more.

Let $\mathcal{I}(\mathbb{R})^d$ be the set of all $d$-dimensional interval vectors. Let $A_i = [\underline{a}_i, \overline{a}_i]$, $B_i = [\underline{b}_i, \overline{b}_i]$, $i = 1, \dots, d$, for $\boldsymbol{A} = (A_1, \ldots, A_d)$, $\boldsymbol{B} = (B_1, \ldots, B_d) $ and $\lambda \in \mathbb{R}$, then $\boldsymbol{A} + \boldsymbol{B} = (A_1 + B_1, \ldots, A_d + B_d)$ and $\lambda \cdot \boldsymbol{A} = (\lambda \cdot A_1, \ldots, \lambda \cdot A_d)$. The $gH$-difference between $\boldsymbol{A}$ and $\boldsymbol{B}$, denoted $\boldsymbol{A} \ominus_{gH} \boldsymbol{B}$, is defined as $\boldsymbol{A} \ominus_{gH} \boldsymbol{B} = (A_1 \ominus_{gH} B_1, \ldots, A_d \ominus_{gH} B_d)$, where $A_i \ominus_{gH} B_i = [\min\{\underline{a}_i - \underline{b}_i, \overline{a}_i - \overline{b}_i\}, \max\{\underline{a}_i - \underline{b}_i, \overline{a}_i - \overline{b}_i\}]$.

Let $\mathcal{I}(\mathbb{R}_+)$ be the set of all compact intervals contained in $[0, +\infty)$, and 
\[
\mathcal{I}(\mathbb{R}_+)^d = \left\{ (A_1, \ldots, A_d)\ |\ A_i \in \mathcal{I}(\mathbb{R}_+), i=1, \ldots, d \right\}.
\] 
 
\begin{definition}[See \cite{LiLi}]\label{definition2.1}
	For $\boldsymbol{A}, \boldsymbol{B} \in \mathcal{I}(\mathbb{R})^d$, $\boldsymbol{B}$ is said to strictly dominate $\boldsymbol{A}$ from below if $\boldsymbol{A} \ominus_{gH} \boldsymbol{B} \in int \mathcal{I}(\mathbb{R_+})^d$, denoted as $\boldsymbol{B} \prec \boldsymbol{A}$; otherwise, it is denoted as $\boldsymbol{B} \nprec \boldsymbol{A}$.
\end{definition}

\begin{definition}[See \cite{LiLi}]\label{definition2.2}
	Let $\mathcal{X}$ be a Hausdorff topological space, and let $F : \mathcal{X} \to \mathcal{I}(\mathbb{R})$ be an interval-valued function. Then $F$ is said to be continuous if, for each $x \in \mathcal{X}$ and for each $\epsilon > 0$, there exists an open neighborhood $o(x)$ of $x$ such that for each $x' \in o(x)$, $F(x') \ominus_{gH} F(x) \subset (-\epsilon, \epsilon)$. 
\end{definition}

\begin{definition}[See \cite{LiLi}]\label{definition2.3}
	Let $\mathcal{X}$ be a Hausdorff topological space, and $\mathcal{K}$ be a nonempty convex subset of $\mathcal{X}$. The interval-valued function $F: \mathcal{K} \to \mathcal{I}(\mathbb{R})$ is said to be \textit{generalized $\mathcal{I}(\mathbb{R_+})$-quasi-concave} on $\mathcal{K}$ if, for $x_1, x_2 \in \mathcal{K}$, $\boldsymbol{A} \in \mathcal{I}(\mathbb{R})$, and for $\lambda \in [0,1]$, $\boldsymbol{A} \ominus_{gH} F(x_i) \notin int \mathcal{I}(\mathbb{R_+})$ with $i = 1, 2$,
	we have $\boldsymbol{A} \ominus_{gH} F(\lambda x_1 + (1-\lambda) x_2) \notin int \mathcal{I}(\mathbb{R_+})$.
\end{definition}

\begin{lemma}[See \cite{LiLi}]\label{lemma2.4}
	Let $\mathcal{X}_1$ and $\mathcal{X}_2$ be two Hausdorff topological spaces, $X_i$ be a nonempty compact subset of $\mathcal{X}_i\ (i = 1,2)$. And let $F : X_1 \times X_2 \to \mathcal{I}(\mathbb{R})$ be a continuous interval-valued function. Define the set-valued mapping $\Phi: X_2 \rightrightarrows X_1$ as follows: for any $y \in X_2$,
	\[
	\Phi(y) = \{x \in X_1\ |\ F(x,y)\nprec F(u, y),\ \forall\ u \in X_1 \}
	\]
	Then $\Phi$ is nonempty compact-valued and upper semicontinuous.
\end{lemma}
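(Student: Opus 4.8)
The plan is to reduce the interval condition defining $\Phi$ to a pair of scalar Pareto conditions, after which the three assertions become standard facts about Pareto-maximal sets of continuous objectives on a compact domain. Writing $F(x,y)=[\underline f(x,y),\overline f(x,y)]$, the continuity of $F$ in the sense of Definition \ref{definition2.2} is equivalent to the joint continuity of the two endpoint functions $\underline f,\overline f:X_1\times X_2\to\mathbb R$, since $F(x')\ominus_{gH}F(x)\subset(-\epsilon,\epsilon)$ holds precisely when $|\underline f(x')-\underline f(x)|<\epsilon$ and $|\overline f(x')-\overline f(x)|<\epsilon$. Moreover, unwinding Definition \ref{definition2.1}, one has $F(x,y)\prec F(u,y)$ iff $F(u,y)\ominus_{gH}F(x,y)\in\mathrm{int}\,\mathcal I(\mathbb R_+)$, i.e. iff $\underline f(u,y)>\underline f(x,y)$ and $\overline f(u,y)>\overline f(x,y)$. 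Hence $x\in\Phi(y)$ exactly when no $u\in X_1$ strictly improves both endpoints, so $\Phi(y)$ is the set of weakly Pareto-maximal (that is, not strictly dominated) points of the bi-objective $(\underline f(\cdot,y),\overline f(\cdot,y))$ on $X_1$.

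For nonemptiness I would scalarize: fix $y$ and maximize the continuous function $g(x)=\underline f(x,y)+\overline f(x,y)$ over the compact set $X_1$, obtaining a maximizer $x^\ast$. If some $u$ satisfied $\underline f(u,y)>\underline f(x^\ast,y)$ and $\overline f(u,y)>\overline f(x^\ast,y)$, then adding the two inequalities would give $g(u)>g(x^\ast)$, contradicting maximality; hence $x^\ast\in\Phi(y)$ and $\Phi(y)\neq\emptyset$. For compactness, since $\Phi(y)\subset X_1$ and $X_1$ is compact, it suffices to show $\Phi(y)$ is closed, equivalently that its complement is open. If $x_0\notin\Phi(y)$, pick a witness $u_0$ with $\underline f(u_0,y)>\underline f(x_0,y)$ and $\overline f(u_0,y)>\overline f(x_0,y)$; by continuity of $\underline f(\cdot,y)$ and $\overline f(\cdot,y)$ the same strict inequalities persist (with the same $u_0$) for all $x$ in a neighborhood of $x_0$, so this whole neighborhood lies in the complement.

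The main work is upper semicontinuity, which I would establish through the closed-graph criterion for correspondences: since $X_1$ is compact Hausdorff, a map with closed graph is automatically upper semicontinuous and compact-valued, so it is enough to show $\mathrm{Gr}(\Phi)=\{(y,x):x\in\Phi(y)\}$ is closed in $X_2\times X_1$. Take a net $(y_\alpha,x_\alpha)\to(y_0,x_0)$ with $x_\alpha\in\Phi(y_\alpha)$, and suppose for contradiction $x_0\notin\Phi(y_0)$, witnessed by some $u_0$ with $\underline f(u_0,y_0)>\underline f(x_0,y_0)$ and $\overline f(u_0,y_0)>\overline f(x_0,y_0)$. Using joint continuity, $\underline f(u_0,y_\alpha)\to\underline f(u_0,y_0)$ while $\underline f(x_\alpha,y_\alpha)\to\underline f(x_0,y_0)$, and likewise for $\overline f$; because the limiting inequalities are strict, for all sufficiently large $\alpha$ we get $\underline f(u_0,y_\alpha)>\underline f(x_\alpha,y_\alpha)$ and $\overline f(u_0,y_\alpha)>\overline f(x_\alpha,y_\alpha)$, i.e. $u_0$ strictly dominates $x_\alpha$, contradicting $x_\alpha\in\Phi(y_\alpha)$. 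Thus the graph is closed and $\Phi$ is upper semicontinuous.

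The only genuine obstacle I anticipate is this last step: one must work with nets rather than sequences, since $\mathcal X_1,\mathcal X_2$ are only assumed Hausdorff and need not be metrizable, and one must invoke the closed-graph theorem for correspondences in the precise form that uses compactness of the target $X_1$ — without that compactness a closed graph would not imply upper semicontinuity. Everything else follows routinely from the endpoint reduction and the continuity of $\underline f,\overline f$.
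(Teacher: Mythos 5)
The paper never proves this lemma: it is stated with a citation to \cite{LiLi} (listed in the bibliography as unpublished work), so there is no in-paper argument to compare yours against, and your proof must be judged on its own terms. On those terms it is correct and complete. The endpoint reduction is exactly the right move: writing $F=[\underline f,\overline f]$, the continuity of Definition \ref{definition2.2} is equivalent to joint continuity of $\underline f$ and $\overline f$, and Definition \ref{definition2.1} unwinds to ``$F(x,y)\prec F(u,y)$ iff both endpoints strictly increase,'' so $\Phi(y)$ is precisely the weak Pareto set of the bi-objective $\bigl(\underline f(\cdot,y),\overline f(\cdot,y)\bigr)$ on $X_1$. Your three steps are then all sound: nonemptiness by maximizing the sum $\underline f+\overline f$ (a continuous function on a compact space attains its maximum, no metrizability needed); compactness by showing the complement of $\Phi(y)$ in $X_1$ is open, since a strict dominance witness persists on a neighborhood; and upper semicontinuity via the closed-graph criterion, which you correctly invoke in the form requiring compactness of the codomain $X_1$, and correctly run with nets rather than sequences since the spaces are only Hausdorff. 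The one caveat worth flagging is that the paper never specifies the topology on $\mathcal I(\mathbb R)$, so your identification $\mathrm{int}\,\mathcal I(\mathbb R_+)=\{[a,b]\ |\ a>0\}$ --- on which the whole endpoint equivalence rests --- is an (entirely standard and surely intended) reading rather than something quotable from the text; under either the Hausdorff metric or the identification of $[a,b]$ with the point $(a,b)\in\mathbb R^2$, it is the correct one.
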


\begin{theorem}[Fan-Glicksberg\cite{Fan,Glicksberg}]\label{theorem2.5}
	Let $X$ be a nonempty compact subset of a Hausdorff locally convex topological vector space $\mathcal{Y}$. If $\Phi : X \rightrightarrows X$ is an upper-semicontinuous set-valued mapping with nonempty convex and compact values, then there exists $x^* \in X$ such that $x^* \in \Phi(x^*)$.
\end{theorem}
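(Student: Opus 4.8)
The statement is the Kakutani--Fan--Glicksberg fixed point theorem, so the plan is to follow the classical route: build finite-dimensional approximate fixed points via Brouwer's theorem and then pass to a limit, using the upper semicontinuity and the (closed) convex values of $\Phi$ together with the local convexity of $\mathcal{Y}$. I will use throughout that $X$ is convex, which the construction requires and which is implicit in the hypotheses. The scheme is to fix a base of convex, symmetric, open neighborhoods of $0$ in $\mathcal{Y}$ and to produce one approximate fixed point for each such $V$, then let $V$ shrink.

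First I would fix a convex symmetric open neighborhood $V \ni 0$ and exploit compactness: choose $a_1, \dots, a_n \in X$ with $X \subseteq \bigcup_{i=1}^n (a_i + V)$, take a continuous partition of unity $\{\beta_i\}_{i=1}^n$ subordinate to this cover (so $\beta_i \geq 0$, $\sum_i \beta_i \equiv 1$, and $\beta_i(x) > 0$ forces $x - a_i \in V$), and pick a point $b_i \in \Phi(a_i)$ for each $i$ (the values are nonempty). Setting $g(x) = \sum_{i=1}^n \beta_i(x)\, b_i$ defines a continuous map whose image lies in the compact convex set $S = \mathrm{conv}\{b_1, \dots, b_n\}$; since each $b_i \in X$ and $X$ is convex, $S \subseteq X$, and $S$ sits inside a finite-dimensional affine subspace. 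Applying Brouwer's fixed point theorem to $g|_S : S \to S$ yields $x_V \in S$ with $x_V = \sum_i \beta_i(x_V)\, b_i$, that is, a convex combination of points $b_i \in \Phi(a_i)$ for which every contributing index satisfies $a_i - x_V \in V$.

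Finally I would pass to the limit. Directing the neighborhoods $V$ by reverse inclusion produces a net $(x_V)$ in the compact set $X$, so a subnet converges to some $x^* \in X$; I claim $x^* \in \Phi(x^*)$, and this step is the main obstacle. Suppose not. Since $\Phi(x^*)$ is closed and convex and $\mathcal{Y}$ is Hausdorff and locally convex, the Hahn--Banach separation theorem supplies a continuous linear functional $\ell$ and a scalar $\alpha$ with $\ell(z) < \alpha < \ell(x^*)$ for all $z \in \Phi(x^*)$. The set $O = \{z : \ell(z) < \alpha\}$ is open and contains $\Phi(x^*)$, so upper semicontinuity of $\Phi$ gives a neighborhood $N$ of $x^*$ with $\Phi(x') \subseteq O$ for all $x' \in N$. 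Because every contributing $a_i$ obeys $a_i - x^* = (a_i - x_V) + (x_V - x^*)$ with $a_i - x_V \in V$ and $x_V \to x^*$ along the subnet, once $V$ is small and the subnet far enough each such $a_i$ lies in $N$, hence $\ell(b_i) < \alpha$, and therefore $\ell(x_V) = \sum_i \beta_i(x_V)\,\ell(b_i) < \alpha$. Passing to the limit along the subnet and using continuity of $\ell$ gives $\ell(x^*) \leq \alpha$, contradicting $\ell(x^*) > \alpha$. Hence $x^* \in \Phi(x^*)$. The delicate points to get right are the subordination property of the partition of unity (which controls exactly which $a_i$ are forced close to $x_V$) and the correct interchange of the separation estimate with the limit along the subnet.
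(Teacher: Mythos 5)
The paper does not prove this statement at all: it is imported as the classical Fan--Glicksberg fixed point theorem, with citations to Fan (1952) and Glicksberg (1952), and is then invoked as a black box in the proof of Theorem 3.2. So there is no internal proof to compare against, and your argument must stand on its own --- which it does. What you give is the standard approximation proof: a partition of unity subordinate to the finite cover $\{a_i+V\}$ (available since $X$ is compact Hausdorff, hence normal), Brouwer's theorem applied to the map $g(x)=\sum_i\beta_i(x)b_i$ on the finite-dimensional polytope $\mathrm{conv}\{b_1,\dots,b_n\}$, a convergent subnet of the approximate fixed points $x_V$, and finally Hahn--Banach separation combined with upper semicontinuity to exclude $x^*\notin\Phi(x^*)$. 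The two delicate points you single out are exactly the right ones, and you handle both correctly: the subordination property is what forces $a_i-x_V\in V$ for every contributing index, and the passage of the separation estimate to the limit requires the standard $W+W\subseteq U$ bookkeeping to place the contributing $a_i$ inside the neighborhood $N$ supplied by upper semicontinuity before concluding $\ell(x_V)<\alpha$. One remark deserves emphasis: you note that convexity of $X$ is ``implicit in the hypotheses,'' and your construction genuinely needs it (to get $\mathrm{conv}\{b_1,\dots,b_n\}\subseteq X$, so that $g$ restricts to a self-map of the polytope and the limit point is compared against $\Phi$ on $X$). The statement as printed in the paper omits convexity of $X$, and without it the theorem is false: take $X$ the unit circle in $\mathbb{R}^2$ and $\Phi(x)=\{-x\}$, which is continuous with nonempty compact convex values but has no fixed point. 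Both Fan's and Glicksberg's original theorems assume $X$ convex, and the paper's application in Theorem 3.2 is to a product of convex sets $U_i$, so this is an omission in the transcription of the theorem rather than in its use; your flag is a genuine correction, not a pedantic one.
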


\begin{definition}[See\cite{Stefanini}]\label{definition2.6}
	Let $F : (t_0, t_1) \to \mathcal{I}(\mathbb{R})$ be an interval-valued function. For $t \in (t_0, t_1)$ and $t + h \in (t_0, t_1)$, if $\lim_{h \to 0} \frac{F(t + h) \ominus_{gH} F(t)}{h} $ exists, then $F$ is said to be $gH$-differentiable at $t$. The $gH$-derivative of $F$ is denoted by
	\[
	F'(t) = \lim_{h \to 0} \frac{F(t + h) \ominus_{gH} F(t)}{h}.
	\]
\end{definition}

The interval-valued function $F$ is said to be $gH$-right-differentiable at $t$ if the limit of $\frac{F(t + h) \ominus_{gH} F(t)}{h}$ as $h \to 0^+$ exists, and it is said to be $gH$-left-differentiable at $t$ if the limit of $\frac{F(t + h) \ominus_{gH} F(t)}{h}$ as $h \to 0^-$ exists. Furthermore, $F$ is said to be $gH$-differentiable in the interval $[t_0, t_1]$ if it is differentiable at every point in $(t_0, t_1)$ and both $gH$-right-differentiable at $t_0$ and $gH$-left-differentiable at $t_1$.

\begin{theorem}[See\cite{Chalco-Cano}]\label{theorem2.7}
	Let $F :[t_0, t_1] \to \mathcal{I}(\mathbb{R})$ be and interval-valued function. Assuming that $F$ is $gH$-differentiable at $t \in [t_0, t_1]$, then $F$ is continuous at $t \in [t_0, t_1]$.
\end{theorem}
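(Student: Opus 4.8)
The plan is to show that $gH$-differentiability forces the $gH$-difference $F(t+h)\ominus_{gH}F(t)$ to shrink to the degenerate interval $[0,0]$ as $h \to 0$, which is precisely the continuity condition of Definition \ref{definition2.2}. First I would fix $t$ and abbreviate the $gH$-difference by $D(h) := F(t+h)\ominus_{gH}F(t)$, which for each admissible $h$ is a genuine compact interval; write $D(h) = [\underline{d}(h),\overline{d}(h)]$. By Definition \ref{definition2.6}, the hypothesis says that the quotient $D(h)/h$ converges, as $h \to 0$, to the interval $F'(t) =: [p,q]$. The single clean identity driving the whole argument is $D(h) = h \cdot \bigl(D(h)/h\bigr)$, so that $D(h)$ is the product of a scalar tending to $0$ with an interval tending to the fixed bounded interval $[p,q]$.

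Next I would make the limit-of-a-product step precise at the level of endpoints, which is where the only real bookkeeping lies. Because scalar multiplication by a negative number reverses an interval, the endpoints of $D(h)/h$ depend on the sign of $h$: for $h>0$ they are $\underline{d}(h)/h$ and $\overline{d}(h)/h$, while for $h<0$ they are $\overline{d}(h)/h$ and $\underline{d}(h)/h$. In either case the convergence $D(h)/h \to [p,q]$ means each of these two quotients converges to a finite limit ($p$ or $q$). Multiplying back by $h$ and using that $h \to 0$ while the quotients stay bounded, I obtain $\underline{d}(h) \to 0$ and $\overline{d}(h) \to 0$ in both sign regimes.

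Finally I would translate this back into Definition \ref{definition2.2}: given $\epsilon>0$, the convergence of both endpoints of $D(h)$ to $0$ yields a $\delta>0$ with $|\underline{d}(h)|<\epsilon$ and $|\overline{d}(h)|<\epsilon$ whenever $0<|h|<\delta$, hence $F(t+h)\ominus_{gH}F(t) = D(h) \subset (-\epsilon,\epsilon)$. This is exactly continuity of $F$ at $t$. For the two endpoints $t_0$ and $t_1$ the same computation applies verbatim, restricting to $h\to 0^+$ and $h\to 0^-$ respectively and invoking the corresponding one-sided $gH$-derivative.

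I expect the main (and only genuine) obstacle to be the sign-of-$h$ bookkeeping in the interval scalar multiplication: one must resist the temptation to assert $\underline{d}(h)/h \to p$ unconditionally, since for $h<0$ the roles of the two endpoints swap. Everything else is the classical ``differentiability implies continuity'' argument transported to the $gH$-calculus, and no compactness or completeness of $\mathcal{I}(\mathbb{R})$ beyond the existence of the defining limit is needed.
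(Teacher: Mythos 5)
The paper does not prove this theorem at all: it is imported verbatim from the cited reference (Chalco-Cano et al.), so there is no in-paper argument to compare against. Your proof is correct and is essentially the standard argument from that literature: writing $D(h)=F(t+h)\ominus_{gH}F(t)$, the identity $D(h)=h\cdot\bigl(D(h)/h\bigr)$ plus boundedness of the difference quotient forces both endpoints of $D(h)$ to $0$, which under Definition \ref{definition2.2} (equivalently, Hausdorff-metric convergence $D(h)\to[0,0]$) is exactly continuity at $t$; your careful handling of the endpoint swap for $h<0$ and of the one-sided cases at $t_0$ and $t_1$ closes the only places where such a proof could go wrong.
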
 

\begin{theorem}[See\cite{Chalco-Cano}]\label{theorem2.8}
	Let $F :[t_0, t_1] \to \mathcal{I}(\mathbb{R})$ be an interval-valued function, where $F(t) = [\underline{f}(t), \overline{f}(t)]$. Then, $F$ is $gH$-differentiable if and only if one of the following conditions holds:
	\begin{enumerate}
		\item[(1)] $\underline{f}$ and $\overline{f}$ is differentiable at $t \in [t_0, t_1]$,
		\item[(2)] the one-sided derivatives $\underline{f}'_-(t)$, $\underline{f}'_+(t)$, $\overline{f}'_-(t)$ and $\overline{f}'_+(t)$ all exist, and satisfy $\underline{f}'_-(t) = \overline{f}'_+(t)$, $\underline{f}'_+(t) = \overline{f}'_-(t)$.
	\end{enumerate}
\end{theorem}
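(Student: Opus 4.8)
The plan is to reduce the interval-valued limit of Definition~\ref{definition2.6} to the convergence of its two scalar endpoints and then to read off conditions (1) and (2) from a case analysis on which endpoint-difference the inner extrema select. Writing $\Delta\underline{f}=\underline{f}(t+h)-\underline{f}(t)$ and $\Delta\overline{f}=\overline{f}(t+h)-\overline{f}(t)$, the defining formula for $\ominus_{gH}$ gives
\[
F(t+h)\ominus_{gH}F(t)=\left[\,\min\{\Delta\underline{f},\Delta\overline{f}\},\ \max\{\Delta\underline{f},\Delta\overline{f}\}\,\right].
\]
First I would divide by $h$, being careful that scalar multiplication by $1/h$ reverses an interval when $h<0$. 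For $h>0$ this is immediate, and for $h<0$ the reversal combined with the identities $\min\{a,b\}/h=\max\{a/h,b/h\}$ and $\max\{a,b\}/h=\min\{a/h,b/h\}$ shows that in \emph{both} cases the difference quotient takes the uniform shape
\[
\frac{F(t+h)\ominus_{gH}F(t)}{h}=\left[\,\min\left\{\tfrac{\Delta\underline{f}}{h},\tfrac{\Delta\overline{f}}{h}\right\},\ \max\left\{\tfrac{\Delta\underline{f}}{h},\tfrac{\Delta\overline{f}}{h}\right\}\,\right].
\]
Since a net of intervals converges iff both endpoints converge, $F$ is $gH$-differentiable at $t$ precisely when the one-sided limits of these two extremal quotients exist and coincide from the left and from the right.

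Next I would compute those one-sided limits in terms of the lateral derivatives $\underline{f}'_\pm(t),\overline{f}'_\pm(t)$. Letting $h\to0^+$ sends the quotient to $[\min\{\underline{f}'_+,\overline{f}'_+\},\max\{\underline{f}'_+,\overline{f}'_+\}]$, and letting $h\to0^-$ sends it to $[\min\{\underline{f}'_-,\overline{f}'_-\},\max\{\underline{f}'_-,\overline{f}'_-\}]$. Equality of these two limits is equivalent to the coincidence of the unordered pairs $\{\underline{f}'_+,\overline{f}'_+\}=\{\underline{f}'_-,\overline{f}'_-\}$, because matching both the minimum and the maximum of two-element multisets forces the multisets themselves to agree. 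This set equality splits into exactly two alternatives: either $\underline{f}'_+=\underline{f}'_-$ and $\overline{f}'_+=\overline{f}'_-$, which is condition (1) (both endpoints differentiable); or $\underline{f}'_+=\overline{f}'_-$ and $\overline{f}'_+=\underline{f}'_-$, which is condition (2). The sufficiency direction then follows by simply running this computation backwards: under (1) the right and left limits both equal $[\min\{\underline{f}',\overline{f}'\},\max\{\underline{f}',\overline{f}'\}]$, and under (2) the cross-equalities make the two limiting intervals identical, so the $gH$-derivative exists.

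The delicate point, and the step I expect to be the main obstacle, is the necessity direction: from the mere convergence of the two extremal quotients $\min\{\Delta\underline{f}/h,\Delta\overline{f}/h\}$ and $\max\{\Delta\underline{f}/h,\Delta\overline{f}/h\}$ one must recover the four individual lateral derivatives, so that the unordered-pair matching above can even be formulated. This is where the two alternatives genuinely arise, according to whether or not the inner $\min/\max$ switches which endpoint it selects as $h$ crosses $0$; handling it cleanly requires working in the setting where the lateral derivatives of $\underline{f}$ and $\overline{f}$ are available (equivalently, rephrasing everything through the midpoint and half-width of $F$, whose behavior decouples the two endpoints). Once the existence of $\underline{f}'_\pm,\overline{f}'_\pm$ is secured, the remaining argument is the purely combinatorial set-equality case split, and continuity of $F$ at $t$ (Theorem~\ref{theorem2.7}) guarantees that $\underline{f},\overline{f}$ are themselves continuous at $t$, so no separate regularity issue obstructs taking the limits.
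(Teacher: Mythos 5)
The paper gives no proof of Theorem~\ref{theorem2.8}; it is quoted from the cited reference, so your argument can only be checked against the statement itself. Your reduction of the difference quotient to the uniform shape $\bigl[\min\{\Delta\underline{f}/h,\Delta\overline{f}/h\},\ \max\{\Delta\underline{f}/h,\Delta\overline{f}/h\}\bigr]$ (valid for both signs of $h$) is correct, and your \emph{sufficiency} argument is sound: under (1) or (2) the one-sided limits of the two extremal quotients exist and the unordered pairs $\{\underline{f}'_+,\overline{f}'_+\}$ and $\{\underline{f}'_-,\overline{f}'_-\}$ coincide, so the two one-sided interval limits agree.

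The gap is exactly where you flagged it, and it cannot be closed, because the necessity direction is false under the definitions used here. Convergence of the min and the max of the two quotients is equivalent to convergence of their sum and of their difference, i.e.\ to differentiability of $\underline{f}+\overline{f}$ at $t$ together with existence of $\lim_{h\to 0}\abs{w(t+h)-w(t)}/\abs{h}$, where $w=\overline{f}-\underline{f}$; when that limit is positive, the \emph{signed} quotient $(w(t+h)-w(t))/h$ may still oscillate, so the individual lateral derivatives need not exist. Concretely, take $t=0$, let $s(h)=1$ if $\sin(1/h)\geq 0$ and $s(h)=-1$ otherwise, and set
\[
F(h)=\Bigl[-\tfrac{1}{2}\bigl(1+h\,s(h)\bigr),\ \tfrac{1}{2}\bigl(1+h\,s(h)\bigr)\Bigr],\qquad F(0)=\bigl[-\tfrac{1}{2},\tfrac{1}{2}\bigr].
\]
Then $\Delta\underline{f}=-h\,s(h)/2$ and $\Delta\overline{f}=h\,s(h)/2$, so $F(h)\ominus_{gH}F(0)=[-\abs{h}/2,\abs{h}/2]$ and the difference quotient is constantly $[-1/2,1/2]$; hence $F$ is $gH$-differentiable at $0$ with $F'(0)=[-1/2,1/2]$. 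Yet $(\overline{f}(h)-\overline{f}(0))/h=s(h)/2$ has no one-sided limits, so none of the four lateral derivatives exists and neither (1) nor (2) holds. Your proposed repair --- ``working in the setting where the lateral derivatives are available'' --- is circular, since their existence is precisely what must be proved, and the midpoint/half-width decoupling does not rescue it: $gH$-differentiability controls only the absolute value of the width quotient, not its sign. This defect of the cited characterization has been pointed out in the literature on $gH$-differentiability; the ``only if'' half requires either additional hypotheses or a weaker conclusion, so your proposal stands only in the ``if'' direction.
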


The definition of the $gH$-partial derivative of an $n$-variable interval-valued function is given below, with the assistance of the $gH$-derivative of the interval-valued function.

Let $F : X \subset \mathbb{R}^n \to \mathcal{I}(\mathbb{R})$ be an $n$-variable interval-valued function, and let $x_0 = (x_1^0, \dots, x_n^0) \in X$. For each $i$, the interval-valued function $h_i : X_i \subset \mathbb{R} \to \mathcal{I}(\mathbb{R})$ is defined as follows.
\[
h_i(x_i) = F\big(x_1^0, \dots, x_{i-1}^0, x_i, x_{i+1}^0, \dots, x_n^0\big).
\]
If $h_i$ is $gH$-differentiable at $x_i^0$, then $F$ has the $i$th $gH$-partial derivative at $x_0$, denoted by
\[
\Big(\frac{\partial F}{\partial x_i} \Big)_{gH} (x_0) = h'_i(x_i^0).
\]
Moreover, if $F$ is continuous at $x_0$ and all $gH$-partial derivatives exist in the neighborhood of $x_0$, then $F$ is both continuous and $gH$-differentiable at $x_0$.

\begin{definition}[See \cite{Leal}]\label{definition2.9}
	Let $F$ be an $n$-variable interval-valued function on $X \subset \mathbb{R}^n$, where $F(x) = [\underline{f}(x), \overline{f}(x)]$. Then $F$ is end-point differentiable ($E$-differentiable) at $x_0$ if and only if the real-valued functions $\underline{f}$ and $\overline{f}$ are differentiable at $x_0$. 
\end{definition}

\begin{theorem}[See \cite{Leal}]\label{theorem2.10}
	Let $F$ be an $n$-variable interval-valued function on $X \subset \mathbb{R}^n$, where $F(x) = [\underline{f}(x), \overline{f}(x)]$. Assuming that $F$ is continuously $E$-differentiable, then
	\begin{enumerate}
		\item[(1)] $F$ is continuously $gH$-differentiable;
		\item[(2)] $\underline{f}'(x), \overline{f}'(x) \in F'(x)$.
	\end{enumerate}
\end{theorem}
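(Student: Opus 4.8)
The plan is to deduce both claims from the endpoint characterization of $gH$-differentiability (Theorem~\ref{theorem2.8}) together with an explicit evaluation of the defining limit, since by Definition~\ref{definition2.9} continuous $E$-differentiability is precisely the statement that the two endpoint functions $\underline{f}$ and $\overline{f}$ are continuously differentiable (of class $C^1$) on $X$.

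For part (1), I would fix $x_0 \in X$ and argue coordinate by coordinate. For each $i$, freeze the remaining variables and consider the one-variable section $h_i(x_i) = F(x_1^0,\dots,x_i,\dots,x_n^0) = [\underline{f}(\cdot),\overline{f}(\cdot)]$. Since $\underline{f}$ and $\overline{f}$ are $C^1$ in all variables, their sections are differentiable in $x_i$, so condition~(1) of Theorem~\ref{theorem2.8} guarantees that $h_i$ is $gH$-differentiable at $x_i^0$; that is, the $i$-th $gH$-partial derivative $(\partial F/\partial x_i)_{gH}(x_0)$ exists. The key computation is then to evaluate that derivative explicitly. Using the componentwise formula for $\ominus_{gH}$, for an increment $h$ one has
\[
\frac{h_i(x_i^0+h)\ominus_{gH} h_i(x_i^0)}{h} = \frac{1}{h}\Big[\min\{\Delta\underline{f},\Delta\overline{f}\},\ \max\{\Delta\underline{f},\Delta\overline{f}\}\Big],
\]
where $\Delta\underline{f}=\underline{f}(x_i^0+h)-\underline{f}(x_i^0)$ and $\Delta\overline{f}=\overline{f}(x_i^0+h)-\overline{f}(x_i^0)$. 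Letting $h\to 0^{+}$ and $h\to 0^{-}$ and tracking the sign of $h$ against the $\min/\max$, both one-sided limits coincide and equal
\[
\Big(\frac{\partial F}{\partial x_i}\Big)_{gH}(x_0) = \Big[\min\{\partial_i\underline{f}(x_0),\partial_i\overline{f}(x_0)\},\ \max\{\partial_i\underline{f}(x_0),\partial_i\overline{f}(x_0)\}\Big].
\]

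This verification that the two one-sided limits agree is the main (and essentially the only) obstacle: the factor $1/h$ changes sign with $h$ while the $\min$ and $\max$ in the $gH$-difference may swap, so one must check that these two effects cancel and the two-sided limit genuinely exists. Once the formula is in hand, the rest is continuity bookkeeping. Because $\underline{f},\overline{f}\in C^1$, the maps $x\mapsto\partial_i\underline{f}(x)$ and $x\mapsto\partial_i\overline{f}(x)$ are continuous, hence so are their pointwise minimum and maximum; thus each $gH$-partial derivative exists and is continuous on a neighborhood of $x_0$. Since $F$ is continuous there (its endpoints are), the \emph{Moreover} statement preceding Definition~\ref{definition2.9} applies and yields that $F$ is $gH$-differentiable at $x_0$; combined with the continuity of all partial $gH$-derivatives just established, $F$ is continuously $gH$-differentiable, which is~(1).

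For part (2), I would simply read the inclusion off the explicit interval obtained above. For each $i$, the numbers $\partial_i\underline{f}(x)$ and $\partial_i\overline{f}(x)$ are exactly the two candidates for the endpoints of $(\partial F/\partial x_i)_{gH}(x)$, so each of them lies in the closed interval $[\min\{\partial_i\underline{f}(x),\partial_i\overline{f}(x)\},\ \max\{\partial_i\underline{f}(x),\partial_i\overline{f}(x)\}]$. Assembling these inclusions across $i=1,\dots,n$ gives $\underline{f}'(x),\overline{f}'(x)\in F'(x)$, establishing~(2). No further work is needed beyond the elementary fact that any real number lies between itself and a second value.
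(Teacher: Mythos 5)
Your argument is correct, but there is nothing in the paper to compare it against: Theorem~\ref{theorem2.10} is imported verbatim from Leal et al.~\cite{Leal} (note the ``See \cite{Leal}'' tag) and the paper gives no proof of it, treating it as a known black-box result that feeds into the inclusion (\ref{4.6}) and Theorem~\ref{theorem4.3}. Judged on its own merits, your reconstruction is sound and self-contained within the paper's toolkit: reducing to one-variable sections and invoking condition (1) of Theorem~\ref{theorem2.8} gives existence of each $gH$-partial derivative; your sign-tracking of the factor $1/h$ against the $\min/\max$ swap in the $gH$-difference is exactly the point that needs checking, and it does work out, since for $h<0$ the order reversal from scalar multiplication cancels the order reversal inside the $\min/\max$, so both one-sided limits equal $\bigl[\min\{\partial_i\underline{f},\partial_i\overline{f}\},\ \max\{\partial_i\underline{f},\partial_i\overline{f}\}\bigr]$. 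Continuity of these interval-valued partials follows from continuity of $\partial_i\underline{f},\partial_i\overline{f}$, and the ``Moreover'' criterion stated before Definition~\ref{definition2.9} (continuity of $F$ plus existence of all $gH$-partials in a neighborhood) then yields $gH$-differentiability, hence continuous $gH$-differentiability; part (2) is indeed immediate from the explicit $\min/\max$ formula. As a consistency check, the formula you derive is precisely the one the paper itself uses, without proof, for $\bigl(\partial H_i/\partial x\bigr)_{gH}$ in Section 4, so your reconstruction matches the way the cited theorem is actually applied downstream.
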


Let $F = \{f_1, \dots, f_d\}:[t_0, t_1] \to \mathcal{I}(\mathbb{R})^d$ be an interval-vector-valued function, and let $\mathcal{S}(F)$ be the set of integrable selectable vector-valued functions of $F$ on $[t_0, t_1]$, that is,
\[
\mathcal{S}(F) = \big\{F^*: [t_0, t_1] \to \mathbb{R}^d\ |\ F^*\ \mbox{is integrable}\ \mbox{and}\  F^*(t) \in F(t)\ \mbox{a.e.}\big\}
\]

The integral of $F$ on the interval $[t_0, t_1]$ is defined as follows:
\begin{equation*}
		\int_{t_0}^{t_1} F(x) \mathrm{d}t = \Big\{\int_{t_0}^{t_1} F^*(x) \mathrm{d}t\ |\  F^* \in S(F) \Big\} = \Big(\int_{t_0}^{t_1} f_1(x) \mathrm{d}t, \dots, \int_{t_0}^{t_1} f_d(x) \mathrm{d}t \Big).
\end{equation*}
The integral above exists if $S(F) \neq \emptyset$, and in such cases, $F$ is referred to as Aumann integrable.

\begin{theorem}[See \cite{Leal}]\label{theorem2.11}
	Let $F = (f_1, \dots, f_d)$ be a measurable, integrable and bounded interval-vector-valued function on the interval $[t_0, t_1]$, where $F(x) = \big( [\underline{f}_1(x), \overline{f}_1(x)], \dots, [\underline{f}_d(x), \overline{f}_d(x)] \big)$, then
	\begin{enumerate}
		\item[(1)] $F$ is integrable, and
		\[
		\int_{t_0}^{t_1} F(x) \mathrm{d}t \in \mathcal{I}(\mathbb{R})^d,
		\]
		\item[(2)] $\underline{F} = (\underline{f}_1, \dots, \underline{f}_d)$ and $\overline{F} = (\overline{f}_1, \dots, \overline{f}_d)$ are two continuous vector-valued functions, and
		\[
		\int_{t_0}^{t_1} F(t) \mathrm{d}t = \Big( \Big[\int_{t_0}^{t_1} \underline{f}_1(t) \mathrm{d}t, \int_{t_0}^{t_1} \overline{f}_1(t) \mathrm{d}t\Big], \dots, \Big[\int_{t_0}^{t_1} \underline{f}_d(t) \mathrm{d}t, \int_{t_0}^{t_1} \overline{f}_d(t) \mathrm{d}t\Big] \Big).
		\]
	\end{enumerate}
\end{theorem}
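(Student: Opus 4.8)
The plan is to reduce everything to a single scalar component and then combine a monotonicity bound with a convexity argument. By the componentwise definition of the Aumann integral given immediately before the statement, both assertions reduce to the case $d=1$: it suffices to fix an index $i$, write $f=f_i$, $\underline f=\underline f_i$, $\overline f=\overline f_i$, and prove that $\int_{t_0}^{t_1} f(t)\,\mathrm dt = \big[\int_{t_0}^{t_1}\underline f(t)\,\mathrm dt,\ \int_{t_0}^{t_1}\overline f(t)\,\mathrm dt\big]$ is a well-defined compact interval. The full statement then follows by reassembling the $d$ components.

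First I would record that the endpoint functions $\underline f$ and $\overline f$ are measurable and, being bounded on the compact interval $[t_0,t_1]$, Lebesgue integrable; in particular $\underline f$ is itself an integrable selection, so $\mathcal S(f)\neq\emptyset$ and the Aumann integral is nonempty. For the inclusion $\int_{t_0}^{t_1} f(t)\,\mathrm dt \subseteq \big[\int \underline f,\ \int \overline f\big]$, I would take an arbitrary $f^{*}\in\mathcal S(f)$, use the defining pointwise bound $\underline f(t)\le f^{*}(t)\le \overline f(t)$ a.e., and invoke monotonicity of the Lebesgue integral to get $\int \underline f \le \int f^{*}\le \int \overline f$.

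The key step is the reverse inclusion, which I would obtain by a convexity/intermediate-value argument. For $\lambda\in[0,1]$ set $f^{\lambda}(t)=(1-\lambda)\underline f(t)+\lambda\,\overline f(t)$. Each $f^{\lambda}$ lies in $[\underline f(t),\overline f(t)]$ pointwise and is integrable as a convex combination of integrable functions, hence $f^{\lambda}\in\mathcal S(f)$, and $\int f^{\lambda}=(1-\lambda)\int \underline f+\lambda\int \overline f$. Since this is an affine---hence continuous---function of $\lambda$ taking the value $\int\underline f$ at $\lambda=0$ and $\int\overline f$ at $\lambda=1$, the intermediate value theorem shows every point of $\big[\int\underline f,\ \int\overline f\big]$ is attained by some selection. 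Combining the two inclusions gives the exact formula, proving that the integral is a compact interval and therefore that $\int_{t_0}^{t_1}F(x)\,\mathrm dt\in\mathcal I(\mathbb R)^d$; this settles part (1) and the explicit representation in part (2). The remaining continuity assertion in part (2) I would read as continuity of the indefinite integrals $t\mapsto\int_{t_0}^{t}\underline f_i(s)\,\mathrm ds$ and $t\mapsto\int_{t_0}^{t}\overline f_i(s)\,\mathrm ds$, which is immediate from absolute continuity of the Lebesgue integral of an integrable function.

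I expect the main obstacle to be the very first technical point: justifying that measurability of the interval-valued map $F$ forces the endpoint maps $\underline f$ and $\overline f$ to be measurable. Depending on which notion of measurability one adopts, this is either true by definition (when measurability of $F$ is defined through its endpoints) or requires invoking measurable-selection theory---e.g. realizing $\underline f$ and $\overline f$ as the pointwise infimum and supremum of a Castaing representation of $F$---to conclude measurability and hence integrability. Once this is in place, the rest of the argument is elementary monotonicity and convexity.
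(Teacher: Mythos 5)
The paper itself offers no proof of this statement: Theorem \ref{theorem2.11} is imported as a preliminary from Leal et al.\ \cite{Leal}, so there is no internal argument to compare yours against, and your proposal has to be judged on its own terms. Your proof of the integral formula is correct and is the standard Aumann-integral argument: the componentwise reduction to $d=1$ is legitimate because the integral is defined componentwise; $\mathcal S(f)\neq\emptyset$ because the endpoint functions are measurable and bounded, hence integrable selections; the inclusion $\int_{t_0}^{t_1} f(t)\,\mathrm{d}t\subseteq\big[\int_{t_0}^{t_1}\underline f(t)\,\mathrm{d}t,\ \int_{t_0}^{t_1}\overline f(t)\,\mathrm{d}t\big]$ follows from monotonicity applied to $\underline f\le f^*\le\overline f$ a.e.; and the reverse inclusion follows from the selections $f^\lambda=(1-\lambda)\underline f+\lambda\overline f$ (you do not even need the intermediate value theorem here, since the image of $[0,1]$ under an affine map is already the full segment). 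Your closing remark about measurability of $\underline f$ and $\overline f$ identifies the right technical point, and either of the two resolutions you sketch (measurability of $F$ defined via its endpoints, or a Castaing representation giving $\underline f=\inf_n g_n$ and $\overline f=\sup_n g_n$) settles it.

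The one genuine defect is your handling of the continuity assertion in part (2). As literally stated it is not provable, because it is false under the stated hypotheses: take $\underline f\equiv 0$ and $\overline f=\chi_{[c,t_1]}$ with $t_0<c<t_1$; then $F=[\underline f,\overline f]$ is measurable, bounded and integrable, yet $\overline F$ is discontinuous at $c$. Your silent re-reading of the claim as (absolute) continuity of the indefinite integrals $t\mapsto\int_{t_0}^{t}\underline f_i(s)\,\mathrm{d}s$ proves a different assertion from the one written. A complete answer should state explicitly that the theorem as quoted needs correction --- the intended conclusion about the endpoint functions is presumably integrability, or else continuity of $F$ itself should appear among the hypotheses --- rather than substituting the repaired claim without comment. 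With that caveat made explicit, your argument establishes everything in the theorem that is actually true.
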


\vskip 25pt
\section{\bf Existence of Open-loop Pareto-Nash equilibria}
\vskip 10pt

In this section, our focus lies on the analysis of multi-objective interval differential games and their (weighted) open-loop Pareto-Nash equilibria. We consider the control processes as strategies for the $n$ players, where each player's strategy space is defined by the permissible control set. Consequently, the $n$-player multi-objective interval differential game can be viewed as a specialized version of an $n$-player multi-objective game with interval payoffs. Based on the literature\cite{LiLi}, we established two existence theorems for open-loop Pareto-Nash equilibrium and weighted open-loop Pareto-Nash equilibrium in multi-objective interval differential games, respectively.

Consider the following dynamic system of state evolution in a multi-objective differential game
\begin{equation}\label{3.1}
	\dot{x}(t) = f(t, x(t), u_1(t), \dots, u_n(t)), \ x(t_0) = x_0,
\end{equation}
where $t_0$ and $x_0$ represent the initial time and initial state respectively, $x(t) \in \mathbb{R}^m$ represents the system state, $u_i(t) \in \mathbb{R}^{m_i}$ denotes the control function of Player $i$ ($i = 1, \dots, n$), and $f : [t_0, t_1] \times \mathbb{R}^m \times \mathbb{R}^{m_1} \times \dots \times \mathbb{R}^{m_n} \to \mathbb{R}^m$ is a vector-valued function.

The payoff function $\boldsymbol{J}_i = \{J^i_1, \dots, J^i_{d_i}\} : \mathbb{R}^m \times \mathbb{R}^{m_1} \times \dots \times \mathbb{R}^{m_n} \to \mathcal{I}(\mathbb{R})^{d_i}$ is defined for Player $i$ as follows: for each $(u_1, \dots, u_n) \in \mathbb{R}^{m_1} \times \dots \times \mathbb{R}^{m_n}$,
\begin{equation}\label{3.2}
	\boldsymbol{J}_i(u_1, \dots, u_n) = \boldsymbol{\psi}_i(x(t_1)) + \int_{t_0}^{t_1} \boldsymbol{L}_i(t, x(t), u_1(t), \dots, u_n(t)) \mathrm{d}t,
\end{equation}
where $t_1$ is a terminal time, $\boldsymbol{\psi}_i : \mathbb{R}^m \to \mathcal{I}(\mathbb{R})^{d_i}$ is the Player $i$'s terminal payoff function, and $\boldsymbol{L}_i : [t_0, t_1] \times \mathbb{R}^m \times \mathbb{R}^{m_1} \times \dots \times \mathbb{R}^{m_n} \to \mathcal{I}(\mathbb{R})^{d_i}$ is the Player $i$'s integral payoff function, $i = 1, \dots, n$.

\subsection{Open-loop Pareto-Nash equilibria}
Let $t_0, t_1 \in \mathbb{R}$, $0 \leq t_0 < t_1$, and
\[
C([t_0, t_1]; \mathbb{R}^{m_k}) = \big\{ u_k\ |\ u_k : [t_0, t_1] \to \mathbb{R}^{m_k}\ \mbox{is continuous}\big\}.
\]
For any $u_k \in C([t_0, t_1]; \mathbb{R}^{m_k})$, the norm of $u_k$ is defined as $\|u_k \|:= \max \limits_{t_0 \leq t \leq t_1} \|u_k(t)\|$. Assume that $U_k$ satisfies the following three conditions:

(A1) $U_k$ is a nonempty and closed subset of $C([t_0, t_1]; \mathbb{R}^{m_k})$;

(A1) $U_k$ is a uniformly bounded set, that is, there exists $M_k > 0$, such that for every $u_k \in U_k$, $\|u_k\| \leq M_k$;

(A2) $U_k$ is equicontinuous, that is, for each $\epsilon > 0$, there exists $\delta > 0$, such that for all $ t, t' \in [t_0, t_1]$ with $|t - t'| < \delta$, and for all $ u_k \in U_k$, we have $\|u_k(t) - u_k(t') \| < \epsilon$.

According to the renowned Ascoli-Arzela theorem, $U_k$ is a compact subset of $C([t_0, t_1]; \mathbb{R}^{m_k})$ and is considered as a set of permissible controls.

Let $M > 0$, $L > 0$, 
\[
D = \big\{(t,x, u_1, \dots, u_n) \in \mathbb{R}^{1 + m + m_1 + \dots + m_n}\ |\ t \in [t_0, t_1], x \in \mathbb{R}^m, \|u_k\| \leq M_k, k = 1, \dots, n\big\},
\]

(B1) $f$ is a bounded function on D if, there exists $M > 0$ such that 
\[
\sup \limits_{(t,x, u_1, \dots, u_n) \in D} \|f(t,x, u_1, \dots, u_n)\| \leq M.
\]

(B2) $f$ satisfies the Lipschitz condition with respect to the variable $x$ if, for all $(t,x_1, u_1, \dots, u_n)\in D$ and $(t,x_2, u_1, \dots, u_n) \in D$, it holds that
\[
\|f(t,x_1, u_1, \dots, u_n) - f(t,x_2, u_1, \dots, u_n) \| \leq  L \|x_1 - x_2\|.
\]
Let
\[
Y = \big\{ f : D \to \mathbb{R}^m |\ f\ \mbox{is continuous on}\ D, \mbox{and satisfies conditions (B1) and (B2)} \big\}.
\]
Therefore, the differential equation (\ref{3.1}) has a unique solution $x(t)$ for every $f \in Y$ and every $u_k \in U_k$ (where $k = 1, \dots, N$).

The set of permissible states is
\[
X = \big\{ x \in C^1([t_0, t_1]; \mathbb{R}^m)\ |\  \dot{x}(t) = f(t, x(t), u_1(t), \dots, u_n(t)), f \in Y, \ u_k \in U_k,\ x(t_0) = x_0 \big\}.
\]
\begin{definition}\label{definition3.1}
The control combination $(u_1^*, \dots, u_n^*) \in U_1 \times \dots \times U_n$ is an open-loop Pareto-Nash equilibrium of multi-objective interval differential game(\ref{3.1})-(\ref{3.2}), if for each $i$ and each $u_i \in U_i$, it holds that
\begin{equation*}
	\boldsymbol{J}_i(u_i^*, u_{-i}^*) \nprec \boldsymbol{J}_i(u_i, u_{-i}^*).
\end{equation*}
\end{definition}
Therefore, $(U_i, \boldsymbol{J}_i)_{i \in N}$ can be considered as a multi-objective game with interval payoffs in \cite{LiLi}. Based on Lemma \ref{lemma2.4} and Theorem \ref{theorem2.5}, we can deduce the following theorem.
\begin{theorem}\label{theorem3.2}
	For each $i \in N$, let $U_i$ be a nonempty convex subset of normed linear space $C([t_0, t_1]; \mathbb{R}^{m_i})$, and let $f \in Y$. If there exist $k_i \in \{1, \dots, d_i\}$ such that both of the following conditions are satisfied.
	\begin{enumerate}
	\item[(i)] $J^i_{k_i} : U_1 \times \dots \times U_n \to \mathcal{I}(\mathbb{R})$ is continuous;
	
	\item[(ii)] for each $u_{-i} \in U_{-i}$, $u_i \mapsto J^i_{k_i}(u_i, u_{-i})$ is generalized $\mathcal{I}(\mathbb{R_+})$-quasi-concave.
	\end{enumerate}
	Then the multi-objective interval differential game (\ref{3.1})-(\ref{3.2}) has at least one open-loop Pareto-Nash equilibrium solution. 
\end{theorem}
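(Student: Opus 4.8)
The plan is to reduce the multi-objective problem to a scalar best-response fixed-point problem via the distinguished component $k_i$, and then invoke the Fan--Glicksberg theorem. First I would record that, under the standing hypotheses (A1)--(A2), each $U_k$ is a compact subset of the normed space $C([t_0,t_1];\mathbb{R}^{m_k})$ by the Ascoli--Arzel\`a theorem; together with the convexity assumed in the statement, each $U_i$ is a nonempty, compact and convex subset of a Hausdorff locally convex topological vector space, and hence so are the product spaces $U=\prod_{i}U_i$ and $U_{-i}=\prod_{j\ne i}U_j$.

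For each player $i$, define the best-response correspondence $\Phi_i:U_{-i}\rightrightarrows U_i$ using only the $k_i$-th coordinate payoff,
\[
\Phi_i(u_{-i})=\{u_i\in U_i\ |\ J^i_{k_i}(u_i,u_{-i})\nprec J^i_{k_i}(v_i,u_{-i}),\ \forall\, v_i\in U_i\}.
\]
Applying Lemma \ref{lemma2.4} with $X_1=U_i$, $X_2=U_{-i}$ and $F=J^i_{k_i}$ (continuous by (i)), $\Phi_i$ is nonempty, compact-valued and upper semicontinuous. The convexity of $\Phi_i(u_{-i})$ is exactly where hypothesis (ii) enters: writing $F(u_i)=J^i_{k_i}(u_i,u_{-i})$ and $\boldsymbol{A}=J^i_{k_i}(v_i,u_{-i})$, membership $u_i\in\Phi_i(u_{-i})$ is equivalent, by Definition \ref{definition2.1}, to $\boldsymbol{A}\ominus_{gH}F(u_i)\notin \mathrm{int}\,\mathcal{I}(\mathbb{R}_+)$ for every $v_i$; the generalized $\mathcal{I}(\mathbb{R}_+)$-quasi-concavity of $F$ (Definition \ref{definition2.3}) then propagates this property to every convex combination $\lambda u_i^1+(1-\lambda)u_i^2$, which lies in $U_i$ by convexity. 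Hence $\Phi_i$ has nonempty convex compact values and is upper semicontinuous. The product map $\Phi(u)=\prod_i \Phi_i(u_{-i})$ inherits these properties, and Theorem \ref{theorem2.5} (Fan--Glicksberg) yields a fixed point $u^*=(u_1^*,\dots,u_n^*)\in\Phi(u^*)$.

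It remains to upgrade this coordinatewise fixed point to a genuine (vector) Pareto--Nash equilibrium. By construction $u^*\in\Phi(u^*)$ means $J^i_{k_i}(u_i^*,u_{-i}^*)\nprec J^i_{k_i}(u_i,u_{-i}^*)$ for every $i$ and every $u_i\in U_i$; equivalently the $k_i$-th component of $\boldsymbol{J}_i(u_i,u_{-i}^*)\ominus_{gH}\boldsymbol{J}_i(u_i^*,u_{-i}^*)$ fails to lie in $\mathrm{int}\,\mathcal{I}(\mathbb{R}_+)$. Since the strict vector domination $\prec$ of Definition \ref{definition2.1} demands that \emph{every} coordinate lie in $\mathrm{int}\,\mathcal{I}(\mathbb{R}_+)$, failure in the single coordinate $k_i$ already forces $\boldsymbol{J}_i(u_i^*,u_{-i}^*)\nprec \boldsymbol{J}_i(u_i,u_{-i}^*)$, which is precisely Definition \ref{definition3.1}. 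I expect the main obstacle to be the convexity verification in the second paragraph --- correctly matching the direction of the $gH$-order and the role of the arbitrary interval $\boldsymbol{A}$ in Definition \ref{definition2.3} so that quasi-concavity really delivers convex best-response sets --- together with the bookkeeping needed to confirm that a finite product of upper semicontinuous, compact-valued correspondences remains upper semicontinuous with compact values, so that Fan--Glicksberg applies to $\Phi$.
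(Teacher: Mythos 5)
Your proposal is correct and follows essentially the same route as the paper's own proof: the same best-response correspondence built from the distinguished component $J^i_{k_i}$, Lemma \ref{lemma2.4} for nonemptiness, compactness and upper semicontinuity, hypothesis (ii) exactly where the paper uses it to get convex values, and the Fan--Glicksberg theorem (Theorem \ref{theorem2.5}) for the fixed point. Your closing step, upgrading the failure of domination in the single coordinate $k_i$ to the vector relation $\boldsymbol{J}_i(u_i^*,u_{-i}^*)\nprec\boldsymbol{J}_i(u_i,u_{-i}^*)$, is actually spelled out more explicitly than in the paper, which passes over it with a single ``furthermore.''
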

\begin{proof} 
	We construct the set-valued mapping $\Phi: U \rightrightarrows U$ as follows: for each $u = (u_i, u_{-i}) \in U$, 
	\[
	\Phi(u) = \prod_{i\in N} \Phi_i (u_{-i}),
	\]
	where
	\[
	\Phi_i (u_{-i}) = \{ v_i \in U_i\ |\ J_{k_i}^i(v_i, u_{-i}) \nprec J_{k_i}^i(u_i, u_{-i}),\ \forall\ u_i \in U_i \}.
	\]
	By Lemma \ref{lemma2.4}, we know that every $\Phi_i: U_{-i} \rightrightarrows U_i$ is nonempty, compact valued, and upper semicontinuous.
	
	Next, we will prove that the set $\Phi_i (u_{-i})$ is convex for any $u_{-i} \in U_{-i}$. Let's assume that $v_1, v_2 \in \Phi_i(u_{-i})$, and $\lambda \in [0, 1]$. It is evident that $v_1, v_2 \in U_i$. Furthermore,
	\[
	J_{k_i}^i(u_i, u_{-i}) \ominus_{gH} J_{k_i}^i(v_1, u_{-i}) \notin int \mathcal{I}(\mathbb{R_+})
	\]
	and
	\[
	J_{k_i}^i(u_i, u_{-i}) \ominus_{gH} J_{k_i}^i(v_2, u_{-i}) \notin int \mathcal{I}(\mathbb{R_+}).
	\]
	The convexity of $U_i$ implies that $\lambda v_1 + (1- \lambda)v_2 \in U_i$. Since $u_i \mapsto J_{k_i}^i(u_i, u_{-i})$ is generalized $\mathcal{I}(\mathbb{R_+})^{d_i}$-quasi-concave, then
	\[
	J_{k_i}^i(u_i, u_{-i})\ominus_{gH} J_{k_i}^i(\lambda v_1 + (1- \lambda)v_2, u_{-i}) \notin int \mathcal{I}(\mathbb{R_+}),
	\]
	that is, $\lambda v_1 + (1- \lambda)v_2 \in \Phi_i(u_{-i})$. Therefore, for every arbitrary $u_{-i} \in U_{-i}$, $\Phi_i(u_{-i})$ is a convex set.
	
	In summary, every $\Phi_i: U_{-i} \rightrightarrows U_i$ is an upper semicontinuous set-valued mapping with nonempty convex and compact values, and therefore $\Phi$ is as well. According to Theorem \ref{theorem2.5}, there exists $u^* \in U$ such that $u^* \in \Phi(u^*)$. That is, for any $i \in N$ and any $u_i \in U_i$,
	\[
	J_{k_i}^i(u_i, u_{-i}^*) \ominus_{gH} J_{k_i}^i(u_i^*, u_{-i}^*) \notin int \mathcal{I}(\mathbb{R_+}),
	\]
	furthermore,
	\[
	\boldsymbol{J}_i(u_i, u_{-i}^*) \ominus_{gH} \boldsymbol{J}_i(u_i^*, u_{-i}^*) \notin int \mathcal{I}(\mathbb{R_+})^{d_i}.
	\]
	Therefore, $x^*$ is an open-loop Pareto-Nash equilibrium of the game $(U_i, \boldsymbol{J}_i)_{i \in N}$.
\end{proof}
\subsection{Weighted open-loop Pareto-Nash equilibria}
Now, we consider the weight of each goal in the multi-objective interval differential game (\ref{3.1})-(\ref{3.2}). Let $\omega_i  = (\omega_1^i, \dots, \omega_{d_i}^i) \in int \mathbb{R}^{d_i}_+$ and $\Omega = (\omega_1, \dots, \omega_n)$, then Player $i$'s interval payoff function with weight $\Omega$ is defined as follows: for each $(u_1, \dots, u_n) \in U_1 \times \dots, \times U_n$,
\[
\left \langle \omega_i, \boldsymbol{J}_i (u_1, \dots, u_n) \right \rangle  = \sum_{k = 1}^{d_i}  \omega_k^i \cdot J_k^i(u_1, \dots, u_n).
\]
The multi-objective interval differential game with weight is then transformed into the following interval differential game.
\begin{equation}\label{3.3}
	\dot{x}(t) = f(t, x(t), u_1(t), \dots, u_n(t)), \ x(t_0) = x_0,
\end{equation}
\begin{equation}\label{3.4}
	\left \langle \omega_i, \boldsymbol{J}_i (u_1, \dots, u_n) \right \rangle  = \left \langle \omega_i, \boldsymbol{\psi}_i(x(t_1)) \right \rangle + \int_{t_0}^{t_1} \left \langle \omega_i, \boldsymbol{L}_i(t, x(t), u_1(t), \dots, u_n(t)) \right \rangle \mathrm{d}t.
\end{equation}
Therefore, an open-loop Pareto-Nash equilibrium of (\ref{3.3})-(\ref{3.4}) is referred to as a weighted open-loop Pareto-Nash equilibrium of (\ref{3.1})-(\ref{3.2}).

At this time, $(U_i, \left \langle \omega_i, \boldsymbol{J}_i \right \rangle)_{i \in N}$ can be regarded as a game with interval payoffs. 
Similar to the proof of Theorem \ref{theorem3.2}, we can derive the subsequent theorem.
\begin{theorem}\label{theorem3.3}
	For each $i \in N$, let $\omega_i  = (\omega_1^i, \dots, \omega_{d_i}^i) \in int \mathbb{R}^{d_i}_+$, and let $U_i$ be a nonempty convex subset of the normed linear space $C([t_0, t_1]; \mathbb{R}^{m_k})$. If $f \in Y$ and both of the following conditions are satisfied.
	\begin{enumerate}
		\item[(i)] $\sum_{k = 1}^{d_i}  \omega_k^i \cdot J_k^i : U_1 \times \dots \times U_n \to \mathcal{I}(\mathbb{R})$ is continuous, 
		\item[(ii)] for every $u_{-i} \in U_{-i}$, $u_i \mapsto \sum_{k = 1}^{d_i}  \omega_k^i \cdot J_k^i (u_i, u_{-i})$ is generalized $\mathcal{I}(\mathbb{R_+})$-quasi-concave.
	\end{enumerate}
    Then multi-objective interval differential game (\ref{3.1})-(\ref{3.2}) has at least one weighted open-loop Pareto-Nash equilibrium solution. 
\end{theorem}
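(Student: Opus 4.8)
The plan is to mirror, step for step, the Fan--Glicksberg fixed-point scheme used for Theorem~\ref{theorem3.2}, the only change being that the scalar component payoff $J^i_{k_i}$ is replaced throughout by the weighted aggregate $\langle \omega_i, \boldsymbol{J}_i \rangle = \sum_{k=1}^{d_i} \omega_k^i \cdot J_k^i$. Since each $J_k^i$ takes values in $\mathcal{I}(\mathbb{R})$ and each weight $\omega_k^i>0$, the aggregate $\langle \omega_i, \boldsymbol{J}_i\rangle$ is again a single $\mathcal{I}(\mathbb{R})$-valued function on $U_1\times\cdots\times U_n$, so the weighted game $(U_i,\langle\omega_i,\boldsymbol{J}_i\rangle)_{i\in N}$ of (\ref{3.3})--(\ref{3.4}) is an ordinary game with scalar interval payoffs, whose open-loop Pareto--Nash equilibrium is by construction the desired weighted equilibrium of (\ref{3.1})--(\ref{3.2}). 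First, for each player $i$ I would define the best-response correspondence $\Phi_i:U_{-i}\rightrightarrows U_i$ by
\[
\Phi_i(u_{-i})=\{\,v_i\in U_i\mid \langle\omega_i,\boldsymbol{J}_i(v_i,u_{-i})\rangle\nprec\langle\omega_i,\boldsymbol{J}_i(u_i,u_{-i})\rangle,\ \forall\,u_i\in U_i\,\},
\]
and set $\Phi=\prod_{i\in N}\Phi_i$ on $U=\prod_{i\in N}U_i$.

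Next I would verify the structural properties of each $\Phi_i$. Recalling that conditions (A1)--(A2) together with the Ascoli--Arzel\`a theorem make every $U_i$ a compact subset of $C([t_0,t_1];\mathbb{R}^{m_i})$, and that hypothesis~(i) gives continuity of $\langle\omega_i,\boldsymbol{J}_i\rangle$, I would apply Lemma~\ref{lemma2.4} with $X_1=U_i$, $X_2=U_{-i}$ and $F=\langle\omega_i,\boldsymbol{J}_i\rangle$ to conclude that each $\Phi_i$ is nonempty, compact-valued and upper semicontinuous. For convexity I would argue exactly as in Theorem~\ref{theorem3.2}: for $v_1,v_2\in\Phi_i(u_{-i})$ and $\lambda\in[0,1]$, the defining relation rewrites as $\langle\omega_i,\boldsymbol{J}_i(u_i,u_{-i})\rangle\ominus_{gH}\langle\omega_i,\boldsymbol{J}_i(v_j,u_{-i})\rangle\notin int\,\mathcal{I}(\mathbb{R_+})$ for $j=1,2$, and the generalized $\mathcal{I}(\mathbb{R_+})$-quasi-concavity assumed in hypothesis~(ii), combined with convexity of $U_i$, forces the same non-membership for $\lambda v_1+(1-\lambda)v_2$, so that $\lambda v_1+(1-\lambda)v_2\in\Phi_i(u_{-i})$.

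Finally, the product $\Phi$ inherits nonempty, convex, compact values and upper semicontinuity, so Theorem~\ref{theorem2.5} (Fan--Glicksberg) yields a fixed point $u^*\in\Phi(u^*)$. Unwinding the definition of $\Phi$ gives, for every $i\in N$ and every $u_i\in U_i$, the relation $\langle\omega_i,\boldsymbol{J}_i(u_i,u_{-i}^*)\rangle\ominus_{gH}\langle\omega_i,\boldsymbol{J}_i(u_i^*,u_{-i}^*)\rangle\notin int\,\mathcal{I}(\mathbb{R_+})$, that is, $\langle\omega_i,\boldsymbol{J}_i(u_i^*,u_{-i}^*)\rangle\nprec\langle\omega_i,\boldsymbol{J}_i(u_i,u_{-i}^*)\rangle$. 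By Definition~\ref{definition3.1} applied to the weighted game (\ref{3.3})--(\ref{3.4}), $u^*$ is an open-loop Pareto--Nash equilibrium of that game, hence a weighted open-loop Pareto--Nash equilibrium of (\ref{3.1})--(\ref{3.2}).

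I expect the main difficulty to be bookkeeping rather than conceptual: the statement only assumes each $U_i$ convex, so I must import the standing compactness and equicontinuity assumptions (A1)--(A2) to legitimately invoke both Lemma~\ref{lemma2.4} and the Fan--Glicksberg theorem, and I must check that $\langle\omega_i,\boldsymbol{J}_i\rangle$ is genuinely $\mathcal{I}(\mathbb{R})$-valued and continuous (using $\omega_k^i>0$, so that positive scaling and finite addition respect the interval structure and the $gH$-operations). The one genuinely non-trivial point is confirming that the quasi-concavity hypothesis transfers through the $gH$-difference to yield convexity of the best-response sets; everything else, including upper semicontinuity of the finite product of correspondences, is standard.
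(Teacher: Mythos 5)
Your proposal is correct and coincides with the paper's own treatment: the paper simply remarks that Theorem~\ref{theorem3.3} follows ``similar to the proof of Theorem~\ref{theorem3.2}'' by viewing $(U_i, \langle \omega_i, \boldsymbol{J}_i \rangle)_{i \in N}$ as a game with scalar interval payoffs, which is precisely the substitution-and-rerun of the Fan--Glicksberg argument you carry out. Your additional bookkeeping remarks (importing the compactness of $U_i$ from (A1)--(A2) via Ascoli--Arzel\`a, and checking that positive weights preserve the $\mathcal{I}(\mathbb{R})$-valued structure) are sound and, if anything, make explicit what the paper leaves implicit.
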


\vskip 25pt
\section{\bf $n$-player interval differential games}
\vskip 10pt
In this section, our focus lies on the $n$-player interval differential game and determining the conditions under which its open-loop Pareto-Nash equilibrium exists. Initially, we construct an interval-valued Hamiltonian function for each player and apply the Pontryagin maximum principle to derive the necessary conditions for the existence of an open-loop Pareto-Nash equilibrium in the interval differential game. Subsequently, we utilize the concavity property of the endpoint-maximization Hamiltonian function to establish sufficient conditions for such an equilibrium to exist.

\subsection{Necessary conditions for the existence of open-loop Pareto-Nash equilibrium}
The interval differential game can be expressed through the state equation denoted as
\begin{equation}\label{4.1}
	\dot{x}(t) = f(t, x(t), u_1(t), \dots, u_n(t)), \ x(t_0) = x_0,
\end{equation}
while the payoff functions for Players 1 and 2 can be represented by 
\begin{equation}\label{4.2}
	J_i(u_1, \dots, u_n) = \psi_i(x(t_1)) + \int_{t_0}^{t_1} L_i(t, x(t), u_1(t), \dots, u_n(t)) \mathrm{d}t,
\end{equation}
where $f : [t_0, t_1] \times \mathbb{R}^m \times \mathbb{R}^{m_1} \times \dots \times \mathbb{R}^{m_n} \to \mathbb{R}^m$ is the state function, $\psi_i : \mathbb{R}^m \to \mathcal{I}(\mathbb{R})$ is Player $i$'s terminal payoff function, $L_i : [t_0, t_1] \times \mathbb{R}^m \times \mathbb{R}^{m_1} \times \dots \times \mathbb{R}^{m_n} \to \mathcal{I}(\mathbb{R})$ is Player $i$'s integral payoff function, $i = 1, \dots, n$.
\begin{definition}\label{definition4.1}
	The control combination $(u_1^*, \dots, u_n^*) \in U_1 \times \dots \times U_n$ is said to be an open-loop Pareto-Nash equilibrium of the interval differential game (\ref{4.1})-(\ref{4.2}), if for each $i \in N$ and each $u_i \in U_i$, it holds that
	\begin{equation*}
		\underline{J}_i(u_i^*, u_{-i}^*) \geq \underline{J}_i(u_i, u_{-i}^*)\ (\ \mbox{or}\ \overline{J}_i( u_i^*, u_{-i}^*) \geq \overline{J}_i(u_i, u_{-i}^*)\ ).
	\end{equation*}
    And $u_i^*$ is Player $i$'s equilibrium control, $J_i(u_i^*, u_{-i}^*)$ represents Player $i$'s Pareto optimal interval.
\end{definition}
To investigate the necessary conditions for the existence of an open-loop Pareto-Nash equilibrium, we define Player $i$'s interval-valued Hamilton function $H_i : [t_0, t_1] \times \mathbb{R}^m \times \mathbb{R}^{m_1} \times \dots \times \mathbb{R}^{m_n} \times \mathbb{R}^m \to \mathcal{I}(\mathbb{R})$ as follows.
\begin{equation}\label{4.3}
	H_i(t, x(t), u_1(t), \dots, u_n(t), p_i(t)) = L_i(t, x(t), u_1(t), \dots, u_n(t)) + p_i^T(t) f(t, x(t), u_1(t), \dots, u_n(t)),
\end{equation}
where $p_i(t) \in C^1([t_0, t_1]; \mathbb{R}^m)$ and
\[
L_i(t, x(t), u_1(t), \dots, u_n(t)) = \big[\underline{L}_i(t, x(t), u_1(t), \dots, u_n(t)), \overline{L}_i(t, x(t), u_1(t), \dots, u_n(t))\big].
\]
\begin{theorem}\label{theorem4.2}
	Assuming that for each $L_i : [t_0, t_1] \times \mathbb{R}^m \times \mathbb{R}^{m_1} \times \dots \times \mathbb{R}^{m_n} \to \mathcal{I}(\mathbb{R})$ ($i = 1, \dots, n$) has continuous $E$-derivative for $x(\cdot)$ and each $u_i(\cdot)$, and that $f : [t_0, t_1] \times \mathbb{R}^m \times \mathbb{R}^{m_1} \times \dots \times \mathbb{R}^{m_n} \to \mathbb{R}^m$ has continuous partial derivative for $x(\cdot)$ and each $u_i(\cdot)$. If $(u_1^*, \dots, u_n^*) \in U_1 \times \dots \times U_n$ is an open-loop Pareto-Nash equilibrium of interval differetial game (\ref{4.1})-(\ref{4.2}), with $x^*$ representing the corresponding state, then there exists $p_i(t) \in C^1([t_0, t_1]; \mathbb{R}^m)$ such that
	\begin{equation*}
		\left\{ \begin{array}{ll}
			& \dot{x}^*(t) = f (t, x^*(t), u_1^*(t), \dots, u_n^*(t)), \\
			& \dot{p}_1(t) = - \frac{\partial \hat{H}_1}{\partial x}(t, x^*(t), u_1^*(t), \dots, u_n^*(t), p_1(t)), \\
			& \ \dots\  \dots\  \dots\  \dots  \\
			& \dot{p}_n(t) = - \frac{\partial \hat{H}_n}{\partial x}(t, x^*(t), u_1^*(t), \dots, u_n^*(t), p_n(t)), \\
			& \frac{\partial \hat{H}_1}{\partial u_1}(t, x^*(t), u_1^*(t), \dots, u_n^*(t), p_1(t)) = 0,\\
			& \ \dots\  \dots\  \dots\  \dots  \\
			& \frac{\partial \hat{H}_n}{\partial u_n}(t, x^*(t), u_1^*(t), \dots, u_n^*(t), p_n(t)) = 0,\\
		\end{array} \right.
	\end{equation*}
	the initial and terminal conditions are given by
	\begin{equation*}
		\left\{ \begin{array}{ll}
			& p_1(t_1) = \nabla \hat{\psi}_1(x^*(t_1)), \\
			& \ \dots\  \dots\  \dots\  \\
			& p_n(t_1) = \nabla \hat{\psi}_n(x^*(t_1)), \\
			& x^*(t_0) = x_0,
		\end{array} \right.
	\end{equation*}
	where $\hat{H}_i = \hat{L}_i + p_i^T \cdot f$, $(\hat{L}_i, \hat{\psi}_i) = (\underline{L}_i, \underline{\psi}_i)$ or $(\overline{L}_i,\overline{\psi}_i)$, $i = 1, \dots, n$.
\end{theorem}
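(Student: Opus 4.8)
The plan is to decouple the equilibrium into $n$ single-player scalar optimal control problems and then apply the classical Pontryagin maximum principle to each one. Fix the equilibrium $(u_1^*, \dots, u_n^*)$ together with its generated trajectory $x^*$. For a fixed index $i$, freeze the rival controls at $u_{-i}^*$; by Definition~\ref{definition4.1} the equilibrium condition says precisely that $u_i^*$ maximizes over $U_i$ either the lower endpoint $\underline{J}_i(\cdot, u_{-i}^*)$ or the upper endpoint $\overline{J}_i(\cdot, u_{-i}^*)$ of the interval payoff. Write $\hat{J}_i$ for the endpoint realizing the equilibrium for Player $i$ and let $(\hat{L}_i, \hat{\psi}_i)$ be the matching pair $(\underline{L}_i, \underline{\psi}_i)$ or $(\overline{L}_i, \overline{\psi}_i)$. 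Invoking Theorem~\ref{theorem2.11} to split the Aumann integral into its endpoints yields the scalar representation
\[
\hat{J}_i(u_i, u_{-i}^*) = \hat{\psi}_i(x(t_1)) + \int_{t_0}^{t_1} \hat{L}_i(t, x(t), u_i(t), u_{-i}^*(t))\,\mathrm{d}t,
\]
subject to $\dot{x} = f(t, x, u_i, u_{-i}^*)$, $x(t_0) = x_0$. Thus each player solves a genuine real-valued optimal control problem in which only $u_i$ is free.

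Second, I would verify the regularity required for the maximum principle and then run the standard variational argument. The hypothesis that $L_i$ is continuously $E$-differentiable means, by Definition~\ref{definition2.9} and Theorem~\ref{theorem2.10}, that the endpoint functions $\underline{L}_i, \overline{L}_i$ (and likewise $\underline{\psi}_i, \overline{\psi}_i$) are continuously differentiable; combined with the assumed $C^1$ dependence of $f$ on $x$ and each $u_i$, the data of the reduced problem are smooth. I would introduce the one-parameter family $u_i^{\varepsilon} = u_i^* + \varepsilon\,\delta u_i$ (admissible for small $\varepsilon$ at an interior optimum), let $\delta x$ solve the variational equation $\delta\dot{x} = f_x\,\delta x + f_{u_i}\,\delta u_i$ with $\delta x(t_0) = 0$, and differentiate $\hat{J}_i$ at $\varepsilon = 0$. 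Choosing the costate $p_i \in C^1([t_0,t_1];\mathbb{R}^m)$ to satisfy the adjoint equation $\dot{p}_i = -\,\partial \hat{H}_i/\partial x$ with $\hat{H}_i = \hat{L}_i + p_i^T f$ and the transversality condition $p_i(t_1) = \nabla \hat{\psi}_i(x^*(t_1))$ cancels all boundary and $\delta x$ terms, leaving $\frac{\mathrm{d}}{\mathrm{d}\varepsilon}\hat{J}_i\big|_{\varepsilon=0} = \int_{t_0}^{t_1} (\partial \hat{H}_i/\partial u_i)\,\delta u_i\,\mathrm{d}t$. Optimality of $u_i^*$ forces this first variation to vanish for every admissible $\delta u_i$, giving $\partial \hat{H}_i/\partial u_i = 0$ along the optimal trajectory.

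Finally, carrying out the above for every $i = 1, \dots, n$, with each player contributing its own costate $p_i$, adjoint equation, stationarity condition, and transversality relation, produces exactly the canonical system and boundary data in the statement, sharing the common state equation $\dot{x}^* = f(t, x^*, u_1^*, \dots, u_n^*)$ and $x^*(t_0) = x_0$. The main obstacle, and the point deserving the most care, is the reduction step itself: one must justify that the interval ordering of Definition~\ref{definition4.1} genuinely collapses to the maximization of a single real-valued endpoint functional, so that the interval structure imposes no constraint beyond selecting $(\hat{L}_i, \hat{\psi}_i)$. One must also confront that the stationarity condition $\partial \hat{H}_i/\partial u_i = 0$, rather than a full Hamiltonian-maximization inequality, is valid only when $u_i^*$ lies in the interior of $U_i$, so that the two-sided variations $\pm\,\delta u_i$ are admissible. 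A secondary technical point is the justification of differentiation under the Aumann integral, which is licensed by the continuity and boundedness guaranteed in Theorem~\ref{theorem2.11}.
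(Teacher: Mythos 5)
Your proposal is correct and follows essentially the same route as the paper's proof: use Definition~\ref{definition4.1} to reduce each player's equilibrium condition to a scalar optimal control problem for the selected endpoint pair $(\hat{L}_i,\hat{\psi}_i)$, then invoke the Pontryagin maximum principle to obtain the adjoint equation, stationarity condition, and transversality condition, concluding by the arbitrariness of $i$. The only difference is that the paper cites the maximum principle as a black box, whereas you sketch its variational derivation and flag the interior-control caveat --- a sound elaboration, not a different argument.
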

\begin{proof}
	Let $(u_1^*, \dots, u_n^*)$ be an open-loop Pareto-Nash equilibrium of the interval differential game (\ref{4.1})-(\ref{4.2}), and let $x^*$ denote the corresponding state. By Definition\ref{definition4.1}, for any fixed $i$ and every $u_i \in U_i$, it follows that
	\begin{equation}\label{4.4}
		\hat{J}_i (u_i^*, u_{-i}^*) \geq \hat{J}_i (u_i, u_{-i}^*), 
	\end{equation}
	where
	\begin{equation}\label{4.5}
		\hat{J}_i(u_1(t), \dots, u_n(t))  = \hat{\psi}_i(x(t_1)) + \int_{t_0}^{t_1} \hat{L}_i(t, x(t), u_1(t), \dots, u_n(t)) \mathrm{d}t.
	\end{equation}
	
	For any fixed $i$, according to (\ref{4.4}), $u_i^*$ is an optimal solution to the following optimal control problem.
	\begin{equation*}
		\begin{split}
			\max_{u_i \in U_i} &\ \hat{J}_i(u_i(t), u_{-i}^*(t))\\
			s. t.\  & \dot{x}(t) = f(t, x(t), u_i(t), u_{-i}^*(t)), \\
			&  x(t_0) = x_0.
		\end{split}
	\end{equation*} 
	According to the Pontryagin maximum principle, there exists $p_i(t) \in C^1([t_0, t_1]; \mathbb{R}^m)$ such that
	\[
	\hat{H}_i = \hat{L}_i + p_i^T \cdot f,
	\]
	and 
	\begin{equation*}
		\left\{ \begin{array}{ll}
			& \dot{x}^*(t) = f (t, x^*(t), u_1^*(t), \dots, u_n^*(t)), \\
			& \dot{p}_i(t) = - \frac{\partial \hat{H}_i}{\partial x}(t, x^*(t), u_1^*(t), \dots, u_n^*(t), p_i(t)), \\
			& \frac{\partial \hat{H}_i}{\partial u_i}(t, x^*(t), u_1^*(t), \dots, u_n^*(t), p_i(t)) = 0,
		\end{array} \right.
	\end{equation*}
	where the initial and terminal conditions are as follows:
	\begin{equation*}
		\left\{ \begin{array}{ll}
			& p_i(t_1) = \nabla \hat{\psi}_i(x^*(t_1)), \\
			& x^*(t_0) = x_0.
		\end{array} \right.
	\end{equation*}
	The conclusion holds, as we can infer from the arbitrariness of $i$.
\end{proof}

Subsequently, we employ the $gH$-partial derivative of the interval-valued Hamiltonian function $H_i$ to establish the requisite conditions for open-loop Pareto-Nash equilibrium in interval differential games. For ease of exposition, let $m = m_1 = \dots = m_n = 1$, although these findings also hold when $m, m_1, \dots, m_n > 1$.

Since $L_i$ is continuously $E$-differentiable, then $H_i$ is continuously $gH$-differentiable. Furthermore, the gradient of $H_i$ at $(x^*, u_1^*, \dots, u_n^*, p_i)$ is defined by
\begin{equation*}
	\begin{split}
		\nabla_{gH} H_i(x^*, u_1^*, \dots, u_n^*, p_i) & = \Big(\big(\frac{\partial H_i}{\partial x}\big)_{gH} (x^*, u_1^*, \dots, u_n^*, p_i), \big(\frac{\partial H_i}{\partial u_1}\big)_{gH} (x^*, u_1^*, \dots, u_n^*, p_i), \\
		& \dots \big(\frac{\partial H_i}{\partial u_n}\big)_{gH} (x^*, u_1^*, \dots, u_n^*, p_i), \big(\frac{\partial H_i}{\partial p_i}\big)_{gH} (x^*, u_1^*, \dots, u_n^*, p_i) \Big),
	\end{split}
\end{equation*}
where, the $gH$-partial derivative of $H_i$ is
\begin{equation*}
	\begin{split}
		\Big(\frac{\partial H_i}{\partial x}\Big)_{gH} (x^*, u_1^*, \dots, u_n^*, p_i) & = \big[\min\Big\{\frac{\partial \underline{H}_i}{\partial x} (x^*, u_1^*, \dots, u_n^*, p_i), \frac{\partial  \overline{H}_i}{\partial x} (x^*, u_1^*, \dots, u_n^*, p_i)\Big\}, \\
		&\ \ \ \ \ \max\Big\{\frac{\partial \underline{H}_i}{\partial x} (x^*, u_1^*, \dots, u_n^*, p_i), \frac{\partial \overline{H}_i}{\partial x} (x^*, u_1^*, \dots, u_n^*, p_i)\Big\} \Big].
	\end{split}
\end{equation*}
If $L_i$ is continuously $E$-differentiable and $f$ is continuously differentiable, then
\[
\Big(\frac{\partial H_i}{\partial x}\Big)_{gH} (x^*, u_1^*, \dots, u_n^*, p_i) = \Big(\frac{\partial L_i}{\partial x}\Big)_{gH} (x^*, u_1^*, \dots, u_n^*) + p_i^T(t) \cdot \frac{\partial f}{\partial x} (x^*, u_1^*, \dots, u_n^*),
\]
and 
\begin{equation}\label{4.6}
	\frac{\partial \hat{H}_i}{\partial x} (x^*, u_1^*, \dots, u_n^*, p_i) \in \Big(\frac{\partial H_i}{\partial x}\Big)_{gH} (x^*, u_1^*, \dots, u_n^*, p_i).
\end{equation}

\begin{theorem}\label{theorem4.3}
	Assuming that $L_i : [t_0, t_1] \times \mathbb{R}^m \times \mathbb{R}^{m_1} \times \dots \times \mathbb{R}^{m_n} \to \mathcal{I}(\mathbb{R})$ possesses continuous $E$-derivative for $x(\cdot)$ and each $u_i(\cdot)$, and that $f : [t_0, t_1] \times \mathbb{R}^m\times \mathbb{R}^{m_1}\times\dots\times 	\mathbb{R}^{m_n}\to	\mathbb{R}^m$ has continuous partial derivative for $x(\cdot)$ and each $u_i(\cdot)$, if $(u_1^*,\dots, u_n^*)$ represents an open-loop Pareto-Nash equilibrium of the interval differential game (\ref {4.1})-(\ref {4.2}), with corresponding state denoted by $x^*$, then there exist functions ${p_i(t) \in C^1 ([t _0 , t _1]; R ^ m)}$ (${i = 1,\dots, n}$), such that
	\begin{equation*}
		\left\{ \begin{array}{ll}
			& \dot{x}^*(t) = f (t, x^*(t), u_1^*(t), \dots, u_n^*(t)),\  x^*(t_0) = x_0\\
			& \dot{p}_1(t) \in - \Big(\frac{\partial H_1}{\partial x}\Big)_{gH}(t, x^*(t), u_1^*(t), \dots, u_n^*(t), p_i(t)),\\
			& \ \ \ \dots\  \dots\  \dots\  \dots\\
			& \dot{p}_n(t) \in - \Big(\frac{\partial H_n}{\partial x}\Big)_{gH}(t, x^*(t), u_1^*(t), \dots, u_n^*(t), p_i(t)),\\
			& 0 \in \Big(\frac{\partial H_1}{\partial u_1}\Big)_{gH} (t, x^*(t), u_1^*(t), \dots, u_n^*(t), p_i(t)),\\
			& \ \ \ \ \dots\  \dots\  \dots\  \dots\\
			& 0 \in \Big(\frac{\partial H_n}{\partial u_n}\Big)_{gH} (t, x^*(t), u_1^*(t), \dots, u_n^*(t), p_i(t)),\\
			& p_i(t_1) \in \nabla_{gH} \psi_i(x^*(t_1)), \ i = 1, \dots, n.\\
		\end{array} \right.
	\end{equation*}
\end{theorem}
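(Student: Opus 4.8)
The plan is to derive Theorem~\ref{theorem4.3} directly from the endpoint-based necessary conditions already furnished by Theorem~\ref{theorem4.2}, upgrading each scalar equation there into a set-membership by invoking the containment of endpoint derivatives inside the corresponding $gH$-partial derivative intervals. The underlying observation is that the interval-valued Hamiltonian decomposes as $H_i = L_i + p_i^T f$ with $L_i = [\underline{L}_i, \overline{L}_i]$ and $p_i^T f$ real-valued, so its endpoints are exactly $\underline{H}_i = \underline{L}_i + p_i^T f$ and $\overline{H}_i = \overline{L}_i + p_i^T f$. Since $L_i$ is continuously $E$-differentiable and $f$ is continuously differentiable, $H_i$ is continuously $E$-differentiable, hence continuously $gH$-differentiable by Theorem~\ref{theorem2.10}(1), and each endpoint partial derivative lies inside the associated $gH$-partial derivative interval by Theorem~\ref{theorem2.10}(2).

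First I would apply Theorem~\ref{theorem4.2}: since $(u_1^*, \dots, u_n^*)$ is an open-loop Pareto-Nash equilibrium with corresponding state $x^*$, there exist costate functions $p_i \in C^1([t_0, t_1]; \mathbb{R}^m)$ satisfying the state equation $\dot{x}^*(t) = f(t, x^*(t), u_1^*(t), \dots, u_n^*(t))$ with $x^*(t_0) = x_0$, the costate equations $\dot{p}_i(t) = -\frac{\partial \hat{H}_i}{\partial x}$, the stationarity conditions $\frac{\partial \hat{H}_i}{\partial u_i} = 0$, and the transversality conditions $p_i(t_1) = \nabla \hat{\psi}_i(x^*(t_1))$, where for each $i$ the pair $(\hat{L}_i, \hat{\psi}_i)$ equals either $(\underline{L}_i, \underline{\psi}_i)$ or $(\overline{L}_i, \overline{\psi}_i)$ according to which inequality in Definition~\ref{definition4.1} is active for that player.

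Next I would convert each of these into the claimed memberships. The state equation and initial condition transfer verbatim. For the costate equations, relation~(\ref{4.6}) gives $\frac{\partial \hat{H}_i}{\partial x} \in \big(\frac{\partial H_i}{\partial x}\big)_{gH}$, whence $\dot{p}_i(t) = -\frac{\partial \hat{H}_i}{\partial x} \in -\big(\frac{\partial H_i}{\partial x}\big)_{gH}$. An identical argument with $u_i$ in place of $x$ yields $\frac{\partial \hat{H}_i}{\partial u_i} \in \big(\frac{\partial H_i}{\partial u_i}\big)_{gH}$, which combined with $\frac{\partial \hat{H}_i}{\partial u_i} = 0$ gives $0 \in \big(\frac{\partial H_i}{\partial u_i}\big)_{gH}$. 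Finally, applying Theorem~\ref{theorem2.10}(2) to $\psi_i$ gives $\nabla \hat{\psi}_i(x^*(t_1)) \in \nabla_{gH} \psi_i(x^*(t_1))$, so the transversality condition becomes $p_i(t_1) \in \nabla_{gH} \psi_i(x^*(t_1))$. Since $i$ is arbitrary, this is precisely the asserted system.

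The only point requiring care — and hence the main obstacle — is justifying the two containments that are not literally~(\ref{4.6}), namely the one for $\partial/\partial u_i$ and the one for $\nabla \psi_i$. Both reduce to the same mechanism as~(\ref{4.6}): because $p_i^T f$ is real-valued, differentiating $\hat{H}_i = \hat{L}_i + p_i^T f$ with respect to $u_i$ merely shifts both endpoints of $\big(\frac{\partial H_i}{\partial u_i}\big)_{gH}$ by the common real quantity $p_i^T \frac{\partial f}{\partial u_i}$, so $\frac{\partial \hat{H}_i}{\partial u_i}$ is one endpoint of that interval and therefore belongs to it; and for $\psi_i$, which involves neither $p_i$ nor $f$, Theorem~\ref{theorem2.10}(2) applies directly. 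One must also confirm that these derivative intervals are well defined, which is guaranteed by continuous $E$-differentiability of $L_i$ and $\psi_i$ together with continuous differentiability of $f$ through Theorem~\ref{theorem2.10}(1). No compatibility between the endpoint choices across different players is needed, since each containment holds whichever endpoint is selected.
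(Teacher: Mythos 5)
Your proposal takes essentially the same route as the paper's proof: invoke Theorem~\ref{theorem4.2} to obtain the endpoint (real-valued) necessary conditions, then use the containment~(\ref{4.6}) (ultimately Theorem~\ref{theorem2.10}) to upgrade each equality into membership in the corresponding $gH$-partial derivative interval. Your explicit justification of the stationarity and transversality memberships simply fills in what the paper compresses into ``Similarly, other conditions can be acquired,'' so the two arguments coincide.
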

\begin{proof}
	Assuming that $(u_1^*, \dots, u_n^*)$ constitutes an open-loop Pareto-Nash equilibrium of (\ref{4.1})-(\ref{4.2}), according to Theorem \ref{theorem4.2}, there exists $p_i(t) \in C^1([t_0, t_1]; \mathbb{R}^m)$ such that
	\[
	\dot{p}_1(t) = - \frac{\partial \hat{H}_1}{\partial x}(t, x^*(t), u_1^*(t), \dots, u_n^*(t), p_1(t)),
	\]
	where $\hat{H}_i = \hat{L}_i + p_i^T f$. Combining formula (\ref{4.6}), we can get
	\[
	\dot{p}_i(t) \in - \Big(\frac{\partial H_i}{\partial x}\Big)_{gH}(t, x^*(t), u_1^*(t), \dots, u_n^*(t), p_1(t)).
	\]
	Similarly, other conditions can be acquired.
\end{proof}

\subsection{Sufficient conditions for the existence of open-loop Pareto-Nash equilibrium}

In the following section, we establish sufficient conditions for the existence of an open-loop Pareto-Nash equilibrium in interval differential games (\ref{4.1})-(\ref{4.2}). To accomplish this, we first employ Player $i$'s real-valued Hamilton function $\hat{H}_i = \hat{L}_i + p_i^T \cdot f$ to construct the maximization Hamilton function $\hat{H}_i^*$.
\begin{equation}\label{4.7}
	\hat{H}_i^*(t, x(t), u_{-i}(t),p_i(t)) = \max \big\{\hat{H}_i(t, x(t), u_i(t), u_{-i}(t),p_i(t))\ |\ u_i \in U_i \big\}.
\end{equation}
\begin{theorem}\label{theorem4.4}
	Let $X$ be a convex set, let $(u_1^*, \dots, u_n^*) \in U_1 \times \dots \times U_n$, and let $x^*(\cdot)$ be the corresponding state. For each player $i$, suppose the terminal payoff function $\psi_i$ is continuously $E$-differentiable and $LU$-concave. If there exists an absolutely continuous function $p_i : [t_0, t_1] \to \mathbb{R}^m$ satisfying 
	\[
	\hat{H}_i(t, x^*(t), u_i^*(t), u_{-i}^*(t),p_i(t)) = \hat{H}_i^*(t, x^*(t), u_{-i}^*(t),p_i(t)),
	\]
	\[
	\dot{p}_i(t) = - \frac{\partial \hat{H}_i^*}{\partial x}(t, x^*(t), u_{-i}^*(t), p_i(t)),
	\]
	\[
	p_i(t_1) = \nabla \hat{\psi}_i(x^*(t_1)),
	\]
	and if $\hat{H}_i^*(t, x(t), u_{-i}(t),p_i(t))$ is concave and continuously differentiable with respect to $x(\cdot)$, then $(u_1^*, \dots, u_n^*)$ constitutes an open-loop Pareto-Nash equilibrium of (\ref{4.1})-(\ref{4.2}).
\end{theorem}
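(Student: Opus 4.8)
The plan is to reduce the game-theoretic, interval-valued statement to $n$ independent real-valued optimal control problems and then run an Arrow-type (maximized-Hamiltonian) sufficiency argument on each. Fix a player $i$ and freeze the rivals at $u_{-i}^*$; by Definition~\ref{definition4.1} it suffices to show that $u_i^*$ maximizes the endpoint functional $\hat{J}_i(\,\cdot\,,u_{-i}^*)$ over $U_i$, where $\hat{J}_i$ is assembled from $(\hat{L}_i,\hat{\psi}_i)=(\underline{L}_i,\underline{\psi}_i)$ or $(\overline{L}_i,\overline{\psi}_i)$ as in (\ref{4.5}). Since $\psi_i$ is $LU$-concave, both $\underline{\psi}_i$ and $\overline{\psi}_i$ are concave, so whichever choice is made the resulting $\hat{\psi}_i$ is a concave real-valued function; this is exactly the terminal-time property I will exploit.

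First I would take an arbitrary admissible $u_i\in U_i$ with associated state $x(\cdot)$ solving (\ref{4.1}) under $(u_i,u_{-i}^*)$, and form the difference $\hat{J}_i(u_i^*,u_{-i}^*)-\hat{J}_i(u_i,u_{-i}^*)$, splitting it into a terminal part $\hat{\psi}_i(x^*(t_1))-\hat{\psi}_i(x(t_1))$ and an integral part. Substituting $\hat{L}_i=\hat{H}_i-p_i^T f=\hat{H}_i-p_i^T\dot{x}$ into the integral and integrating $\int_{t_0}^{t_1}p_i^T(\dot{x}^*-\dot{x})\,\mathrm{d}t$ by parts, the boundary contribution at $t_0$ vanishes because $x^*(t_0)=x(t_0)=x_0$, leaving the boundary term $p_i^T(t_1)\bigl(x^*(t_1)-x(t_1)\bigr)$ at $t_1$.

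Next I would treat the two pieces separately. For the terminal piece, the transversality condition $p_i(t_1)=\nabla\hat{\psi}_i(x^*(t_1))$ together with the gradient inequality for the concave function $\hat{\psi}_i$ yields $\hat{\psi}_i(x^*(t_1))-\hat{\psi}_i(x(t_1))-p_i^T(t_1)(x^*(t_1)-x(t_1))\ge 0$. For the integral piece, which after the integration by parts reads $\int_{t_0}^{t_1}\bigl[(H^*-H)+\dot{p}_i^T(x^*-x)\bigr]\,\mathrm{d}t$ with $H^*=\hat{H}_i(t,x^*,u_i^*,u_{-i}^*,p_i)$ and $H=\hat{H}_i(t,x,u_i,u_{-i}^*,p_i)$, I would chain three facts: the maximization hypothesis $H^*=\hat{H}_i^*(t,x^*,u_{-i}^*,p_i)$; the definition of $\hat{H}_i^*$ as a maximum over $u_i$, giving $H\le\hat{H}_i^*(t,x,u_{-i}^*,p_i)$; and the concavity of $\hat{H}_i^*$ in $x$, giving $\hat{H}_i^*(t,x^*,u_{-i}^*,p_i)-\hat{H}_i^*(t,x,u_{-i}^*,p_i)\ge\frac{\partial\hat{H}_i^*}{\partial x}(t,x^*,u_{-i}^*,p_i)^T(x^*-x)$. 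Inserting the adjoint equation $\dot{p}_i=-\frac{\partial\hat{H}_i^*}{\partial x}$ then gives $(H^*-H)+\dot{p}_i^T(x^*-x)\ge 0$ pointwise, so the integral is nonnegative.

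The hard part will be assembling these inequalities in the correct direction and, in particular, staying within the Arrow framework rather than the Mangasarian one: the costate is driven by $\frac{\partial\hat{H}_i^*}{\partial x}$ and not by $\frac{\partial\hat{H}_i}{\partial x}$ at $u_i^*$, so the concavity that must be invoked is that of the \emph{maximized} Hamiltonian $\hat{H}_i^*$ in $x$, exactly as hypothesized, and one must take care that the max-inequality on $H$ and the gradient inequality on $\hat{H}_i^*$ combine additively with the right signs. Adding the two nonnegative contributions gives $\hat{J}_i(u_i^*,u_{-i}^*)\ge\hat{J}_i(u_i,u_{-i}^*)$ for every $u_i\in U_i$; since $\hat{J}_i$ is either the lower- or the upper-endpoint functional, this is precisely the inequality of Definition~\ref{definition4.1}, and as $i$ was arbitrary, $(u_1^*,\dots,u_n^*)$ is an open-loop Pareto-Nash equilibrium of (\ref{4.1})-(\ref{4.2}).
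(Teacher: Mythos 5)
Your proposal is correct and follows essentially the same route as the paper's proof: an Arrow-type sufficiency argument that combines the gradient inequality for the concave maximized Hamiltonian $\hat{H}_i^*$ with the adjoint equation, the Hamiltonian maximization hypothesis, the transversality condition with the concave $\hat{\psi}_i$, and the vanishing boundary term at $t_0$. The only cosmetic difference is that you perform the integration by parts up front and treat the terminal and integral pieces separately, whereas the paper folds the same steps into a single chain of inequalities ending with $\int_{t_0}^{t_1}\frac{\mathrm{d}}{\mathrm{d}t}\{p_i^T(t)[x(t)-x^*(t)]\}\,\mathrm{d}t$.
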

\begin{proof}
	For each $i$, let $u_i$ represent any admissible control for Player $i$, and let $x(\cdot)$ denote the corresponding state resulting from $(u_i, u_{-i}^*)$. Our objective is to establish that $\hat{J}_i(u_i^*, u_{-i}^*) \geq \hat{J}_i(u_i, u_{-i}^*)$.
	
	Given the concavity and continuous differentiability of $\hat{H}_i^*(t, x(t), u_{-i}(t),p_i(t))$ with respect to $x(\cdot)$, it follows that 
	\begin{equation}\label{4.8}
		\begin{split}
			&\ \  \ \hat{H}_i^*(t, x^*(t), u_{-i}^*(t),p_i(t)) - \hat{H}_i^*(t, x(t), u_{-i}^*(t),p_i(t))\\
			& \geq \frac{\partial \hat{H}_i^*}{\partial x} (t, x^*(t), u_{-i}^*(t),p_i(t)) [x^*(t) - x(t)]\\
			& = - \dot{p}_i^T(t) [x^*(t) - x(t)].
		\end{split}
	\end{equation}

    Due to the continuous $E$-differentiability and $LU$-concavity of $\psi_i$, we can conclude that $\hat{\psi}_i$ is also continuously differentiable and concave. Therefore,
    \[
    \hat{\psi}_i(x^*(t_1)) - \hat{\psi}_i(x(t_1)) \geq \nabla \hat{\psi}_i(x^*(t_1)) [x^*(t_1) - x(t_1)] = p_i^T(t_1) [x^*(t_1) - x(t_1)],
    \]
    that is,
    \begin{equation}\label{4.9}
    \hat{\psi}_i(x^*(t_1)) - \hat{\psi}_i(x(t_1)) + p_i^T(t_1) [x(t_1) - x^*(t_1)] \geq 0.
    \end{equation}

	Based on (\ref{4.1}), (\ref{4.5}), and (\ref{4.7})-(\ref{4.9}), we can conclude that
	\begin{equation*}
		\begin{split}
			&\ \ \  \hat{J}_i(u_i^*, u_{-i}^*) - \hat{J}_i(u_i, u_{-i}^*) \\
			& = \hat{\psi}_i(x^*(t_1)) + \int_{t_0}^{t_1} \hat{L}_i(t, x^*(t), u_i^*(t), u_{-i}^*(t)) \mathrm{d}t - \hat{\psi}_i(x(t_1)) - \int_{t_0}^{t_1} \hat{L}_i(t, x(t), u_i(t), u_{-i}^*(t)) \mathrm{d}t\\
			& = \hat{\psi}_i(x^*(t_1)) + \int_{t_0}^{t_1} \big[\hat{H}_i(t, x^*(t), u_i^*(t), u_{-i}^*(t),p_i(t)) - p_i^T(t) \dot{x}^*(t)\big] \mathrm{d}t\\
			&\ \ - \hat{\psi}_i(x(t_1)) - \int_{t_0}^{t_1} \big[\hat{H}_i(t, x(t), u_i(t), u_{-i}^*(t),p_i(t)) - p_i^T(t) \dot{x}(t)\big] \mathrm{d}t \\
			& \geq \hat{\psi}_i(x^*(t_1)) + \int_{t_0}^{t_1} \big[\hat{H}_i^*(t, x^*(t), u_{-i}^*(t),p_i(t)) - p_i^T(t) \dot{x}^*(t)\big] \mathrm{d}t\\
			&\ \ - \hat{\psi}_i(x(t_1)) - \int_{t_0}^{t_1} \big[\hat{H}_i^*(t, x(t), u_{-i}^*(t),p_i(t)) - p_i^T(t) \dot{x}(t)\big] \mathrm{d}t\\
			& \geq \hat{\psi}_i(x^*(t_1)) - \hat{\psi}_i(x(t_1)) + \int_{t_0}^{t_1} \big\{\dot{p}_i^T(t) [x(t) - x^*(t)] + p_i^T(t) [\dot{x}(t) - \dot{x}^*(t)] \big\} \mathrm{d}t\\
			& = \int_{t_0}^{t_1} \frac{\mathrm{d}}{\mathrm{d}t} \{p_i^T(t) [x(t) - x^*(t)]\} \mathrm{d}t + \hat{\psi}_i(x^*(t_1)) - \hat{\psi}_i(x(t_1))\\
			& = p_i^T(t_1) [x(t_1) - x^*(t_1)] + \hat{\psi}_i(x^*(t_1)) - \hat{\psi}_i(x(t_1)) \geq 0.
		\end{split}
	\end{equation*}
	
	 Therefore, $(u_1^*, \dots, u_n^*)$ constitutes an open-loop Pareto-Nash equilibrium of (\ref{4.1})-(\ref{4.2}).
\end{proof}

\vskip 25pt
\section{\bf Two-player linear quadratic interval differential games}
\vskip 10pt
In this section, our focus lies on the linear quadratic interval differential game involving two players and the computation of its open-loop Pareto-Nash equilibrium. Firstly, we utilize Definition 4.1 to transform the linear quadratic interval differential game into four conventional linear quadratic differential games. Secondly, by employing Theorem 4.2 and solving a system of linear equations, we can derive a specific methodology for calculating the open-loop Pareto-Nash equilibrium of the game. Furthermore, we present a numerical example to illustrate the practical application of this computational approach.

The linear quadratic interval differential game can be expressed through the state equation denoted as
\begin{equation}\label{5.1}
	\dot{x}(t) = a x(t) + b u_1(t) + c u_2(t), \ x(t_0) = x_0,
\end{equation}
while the payoff functions for Players 1 and 2 can be represented by
\begin{equation}\label{5.2}
	J_1(u_1, u_2) = \frac{1}{2}\int_{t_0}^{t_1} \big\{[\underline{g}_1, \overline{g}_1]x^2(t) + [\underline{g}_2, \overline{g}_2] u_1^2(t) \big\} \mathrm{d}t,
\end{equation}
\begin{equation}\label{5.3}
	J_2(u_1, u_2) = \frac{1}{2}\int_{t_0}^{t_1} \big\{[\underline{h}_1, \overline{h}_1]x^2(t) + [\underline{h}_2, \overline{h}_2] u_2^2(t) \big\} \mathrm{d}t,
\end{equation}
where $a, b, c$ are both constants, $x(t) \in \mathbb{R}$, and $[\underline{g}_i, \overline{g}_i], [\underline{h}_i, \overline{h}_i] \in \mathcal{I}(\mathbb{R})$, $u_i(t) \in \mathbb{R}$, $i = 1, 2$.

Let $(u_1(\cdot), u_2(\cdot))$ be an open-loop Pareto-Nash equilibrium of linear quadratic interval differential game (\ref{5.1})-(\ref{5.3}), then the Hamiltonian functions of Players 1 and 2 are expressed as
\[
\mathcal{H}_1(t, x(t), u_1(t), u_2(t), p_1(t)) = \frac{1}{2} \big\{[\underline{g}_1, \overline{g}_1]x^2(t) + [\underline{g}_2, \overline{g}_2] u_1^2(t) \big\} + p_1(t) [a x(t) + b u_1(t) + c u_2(t)]
\]
\[
\mathcal{H}_2(t, x(t), u_1(t), u_2(t), p_2(t)) = \frac{1}{2} \big\{[\underline{h}_1, \overline{h}_1]x^2(t) + [\underline{h}_2, \overline{h}_2] u_2^2(t) \big\} + p_2(t) [a x(t) + b u_1(t) + c u_2(t)],
\]
where $p_i(t) (i = 1, 2)$ is the Hamiltonian adjoint variable of Player $i$.

According to Definition \ref{definition4.1}, $(u_1, u_2)$ constitutes an open-loop Nash equilibrium in one of the following four linear quadratic differential games.

\begin{equation}\label{5.4}
	\left\{ \begin{array}{ll}
		& \dot{x}(t) = a x(t) + b u_1(t) + c u_2(t), x(t_0) = x_0,\\ 
		& \underline{J_1}(u_1, u_2) = \frac{1}{2}\int_{t_0}^{t_1} [\underline{g}_1 x^2(t) + \underline{g}_2 u_1^2(t) ] \mathrm{d}t,\\
		& \underline{J_2}(u_1, u_2) = \frac{1}{2}\int_{t_0}^{t_1} [\underline{h}_1 x^2(t) + \underline{h}_2 u_2^2(t)] \mathrm{d}t.
	\end{array} \right.
\end{equation}

\begin{equation}\label{5.5}
	\left\{ \begin{array}{ll}
		& \dot{x}(t) = a x(t) + b u_1(t) + c u_2(t), x(t_0) = x_0, \\
		& \underline{J_1}(u_1, u_2) = \frac{1}{2}\int_{t_0}^{t_1} [\underline{g}_1 x^2(t) + \underline{g}_2 u_1^2(t) ] \mathrm{d}t,\\
		& \overline{J_2}(u_1, u_2) = \frac{1}{2}\int_{t_0}^{t_1} [\overline{h}_1 x^2(t) + \overline{h}_2 u_2^2(t)] \mathrm{d}t.
	\end{array} \right.
\end{equation}

\begin{equation}\label{5.6}
	\left\{ \begin{array}{ll}
		& \dot{x}(t) = a x(t) + b u_1(t) + c u_2(t), x(t_0) = x_0,\\
		& \overline{J_1}(u_1, u_2) = \frac{1}{2}\int_{t_0}^{t_1} [\overline{g}_1 x^2(t) + \overline{g}_2 u_1^2(t) ] \mathrm{d}t,\\
		& \underline{J_2}(u_1, u_2) = \frac{1}{2}\int_{t_0}^{t_1} [\underline{h}_1 x^2(t) + \underline{h}_2 u_2^2(t)] \mathrm{d}t.
	\end{array} \right.
\end{equation}

\begin{equation}\label{5.7}
	\left\{ \begin{array}{ll}
		& \dot{x}(t) = a x(t) + b u_1(t) + c u_2(t), x(t_0) = x_0,\\
		& \overline{J_1}(u_1, u_2) = \frac{1}{2}\int_{t_0}^{t_1} [\overline{g}_1 x^2(t) + \overline{g}_2 u_1^2(t) ] \mathrm{d}t,\\
		& \overline{J_2}(u_1, u_2) = \frac{1}{2}\int_{t_0}^{t_1} [\overline{h}_1 x^2(t) + \overline{h}_2 u_2^2(t)] \mathrm{d}t.
	\end{array} \right.
\end{equation}
 
Without loss of generality, let us assume that $(u_1(\cdot), u_2(\cdot))$ represents the open-loop Nash equilibrium of the linear quadratic differential game (\ref{5.4}). In this scenario, the real-valued Hamiltonian functions for players 1 and 2 can be expressed as
\[
\underline{\mathcal{H}_1}(t, x(t), u_1(t), u_2(t), p_1(t)) = \frac{1}{2} [\underline{g}_1 x^2(t) + \underline{g}_2 u_1^2(t)]+ p_1(t) (a x(t) + b u_1(t) + c u_2(t)),
\]
\[
\underline{\mathcal{H}_2}(t, x(t), u_1(t), u_2(t), p_2(t)) = \frac{1}{2} [\underline{h}_1 x^2(t) + \underline{h}_2 u_2^2(t)] + p_2(t) (a x(t) + b u_1(t) + c u_2(t)).
\]

In the absence of control and state constraints, the relationship between the control variable $u_i(t)$ and the Hamiltonian adjoint variable $p_i(t)$ can be described as
\begin{equation}\label{5.8}
	u_1(t) = - \frac{b}{\underline{g}_2} p_1(t), \  u_2(t) = - \frac{c}{\underline{h}_2} p_2(t),
\end{equation}
and the adjoint equation is provided by
\begin{equation*}
	\begin{split}
		& \dot{p}_1(t) = - \underline{g}_1 x(t) - a p_1(t),\\
		& \dot{p}_2(t) = - \underline{h}_1 x(t) - a p_2(t).
	\end{split}
\end{equation*}

By substituting equation (\ref{5.8}) into the state equation in (\ref{5.4}) and integrating it with the adjoint equation, we can derive the following system of equations.
\begin{equation}\label{5.9}
	\left\{ \begin{array}{ll}
		& \dot{x}(t) = a x(t) - \frac{b^2}{\underline{g}_2} p_1(t) - \frac{c^2}{\underline{h}_2} p_2(t),\\
		& \dot{p}_1(t) = - \underline{g}_1 x(t) - a p_1(t),\\
		& \dot{p}_2(t) = - \underline{h}_1 x(t) - a p_2(t),\\
		& p_1(t_1) = p_2(t_1) = 0, \ x(t_0) = x_0.
	\end{array} \right.
\end{equation}

Let $y(t) = (x(t), p_1(t), p_2(t))^T$ and
\begin{equation*}
	A =
	\begin{pmatrix}
		a & - \frac{b^2}{\underline{g}_2} & - \frac{c^2}{\underline{h}_2}\\
		- \underline{g}_1 &  - a  &  0 \\
		- \underline{h}_1 &  0  &  - a  \\
	\end{pmatrix}.
\end{equation*}
The system of equations (5.9) can be represented in the matrix form presented below.
\begin{equation}\label{5.10}
	\dot{y}(t) = A y(t).
\end{equation}
Therefore, the open-loop Nash equilibrium of (\ref{5.4}) is fully represented by (\ref{5.8}) and the system (\ref{5.10}). To find the open-loop Nash equilibrium, we need to solve a system of first-order linear differential equations. 

Next, we exclusively consider finite-time problems where $t_1 < \infty$. In this scenario, we are provided with the initial condition $x(t_0) = x_0$ and the terminal condition $p_1(t_1) = p_2(t_1) = 0$. Consequently, the system (\ref{5.10}) represents a two-point boundary value problem.
To solve (\ref{5.10}), we will proceed through three sequential steps.

Firstly, we compute the eigenvalues and corresponding eigenvectors of the matrix A. The determinant of the matrix $A$ is $\det A = a^3 + M a$, where
\[
M = b^2 \frac{\underline{g}_1}{\underline{g}_2} + c^2 \frac{\underline{h}_1}{\underline{h}_2}.
\]
The characteristic equation of matrix $A$ is given by 
\[
\det(sE - A) = (s + a)^2 (s - a) - (s + a) M = 0, 
\]
and it possesses three distinct eigenvalues, namely $s_1 = \sqrt{a^2 + M}$, $s_2 = - \sqrt{a^2 + M}$, and $s_3 = - a$. There are three distinct scenarios based on the sign of each eigenvalue.

(i) $s_1 > 0$, $s_2 < 0$, $s_3 > 0$.  This situation occurs when $\det A < 0$.

(ii) $s_1 > 0$, $s_2 < 0$, $s_3 < 0$. This situation occurs when $\det A > 0$.

(iii) 
$s_1 > 0$, $s_2 < 0$, $s_3 = 0$. This situation occurs when $\det A = 0$.

Secondly, we utilize the eigenvalues and eigenvectors to deduce the fundamental solution of (\ref{5.10}). Assuming solely a stable uncontrolled system $(a < 0)$, which corresponds to case (i) mentioned above. Consequently, two eigenvalues of $A$ are positive while one is negative, indicating the presence of a saddle point in the regular system. The general solution for the regular system (\ref{5.10}) can be formulated as
\begin{equation}\label{5.11}
	y(t) = e^{At} y(t_0) = W e^{\Lambda t} \alpha,
\end{equation}
where $w_i = (w_{1i}, w_{2i}, w_{3i})^T$ is the eigenvector of $s_i$, and
\begin{align*}
	W =  \begin{pmatrix}
		w_{11} & w_{12} & w_{13}\\
		w_{21} & w_{22} & w_{23}\\
		w_{31} & w_{32} & w_{33}
	\end{pmatrix},
	&\ \  e^{\Lambda t} =  \begin{pmatrix}
		e^{s_1 t} & 0 & 0\\
		0 & e^{s_2 t} & 0\\
		0 & 0 & e^{s_3 t}
	\end{pmatrix}.
\end{align*}

Finally, we compute a specific solution that satisfies the boundary conditions. Assuming the validity of our assumption, the three eigenvalues are distinct, ensuring linear independence of the eigenvectors. By employing equation (\ref{5.11}), we can express the three boundary conditions as follows.
\begin{equation*}
	\begin{split}
		x_0 & = \alpha_1 w_{11} e^{s_1 t_0} + \alpha_2 w_{12} e^{s_2 t_0} +\alpha_3 w_{13} e^{s_3 t_0},\\
		0 & = \alpha_1 w_{21} e^{s_1 t_1} + \alpha_2 w_{22} e^{s_2 t_1} +\alpha_3 w_{23} e^{s_3 t_1},\\
		0 & = \alpha_1 w_{31} e^{s_1 t_1} + \alpha_2 w_{32} e^{s_2 t_1} +\alpha_3 w_{33} e^{s_3 t_1}.
	\end{split}
\end{equation*}
The parameters $\alpha_1, \alpha_2, \alpha_3$ can be readily obtained by solving the aforementioned linear equations. Subsequently, utilizing these parameters in conjunction with equations (\ref{5.8}) and (\ref{5.11}), the open-loop Nash equilibrium is fully determined.

By employing a similar methodology, we can derive three additional open-loop Pareto-Nash equilibria for the linear quadratic interval differential game (\ref{5.1})-(\ref{5.3}).
\begin{example}
	The linear quadratic interval differential game can be formulated as
	\begin{equation*}
		\dot{x}(t) = 2 x(t) + u_1(t) + u_2(t), \ x(0) = 1,
	\end{equation*}
	\begin{equation*}
		J_1(u_1, u_2) = \frac{1}{2}\int_0^3 \big\{[0.9, 1.2]x^2(t) + [0.3, 0.6] u_1^2(t) \big\} \mathrm{d}t,
	\end{equation*}
	\begin{equation*}
		J_2(u_1, u_2) = \frac{1}{2}\int_0^3 \big\{[0.8, 1.5]x^2(t) + [0.4, 0.5] u_2^2(t) \big\} \mathrm{d}t.
	\end{equation*}

The open-loop Pareto-Nash equilibrium of the aforementioned interval differential game can be transformed into calculating the open-loop Nash equilibria of the following four linear quadratic differential games.
\begin{equation}\label{5.12}
	\left\{ \begin{array}{ll}
		& \dot{x}(t) = 2 x(t) + u_1(t) + u_2(t), x(0) = 1,\\
		& \underline{J_1}(u_1, u_2) = \frac{1}{2}\int_0^3 [0.9 x^2(t) + 0.3 u_1^2(t) ] \mathrm{d}t,\\
		& \underline{J_2}(u_1, u_2) = \frac{1}{2}\int_0^3 [0.8 x^2(t) + 0.4 u_2^2(t)] \mathrm{d}t.
	\end{array} \right.
\end{equation}

\begin{equation}\label{5.13}
	\left\{ \begin{array}{ll}
		& \dot{x}(t) = 2 x(t) + u_1(t) + u_2(t), x(0) = 1,\\
		& \underline{J_1}(u_1, u_2) = \frac{1}{2}\int_0^3 [0.9 x^2(t) + 0.3 u_1^2(t) ] \mathrm{d}t,\\
		& \overline{J_2}(u_1, u_2) = \frac{1}{2}\int_0^3 [1.5 x^2(t) + 0.5 u_2^2(t)] \mathrm{d}t.
	\end{array} \right.
\end{equation}

\begin{equation}\label{5.14}
	\left\{ \begin{array}{ll}
		& \dot{x}(t) = 2 x(t) + u_1(t) + u_2(t), x(0) = 1,\\
		& \overline{J_1}(u_1, u_2) = \frac{1}{2}\int_0^3 [1.2 x^2(t) + 0.6 u_1^2(t)] \mathrm{d}t,\\
		& \underline{J_2}(u_1, u_2) = \frac{1}{2}\int_0^3 [0.8 x^2(t) + 0.4 u_2^2(t)] \mathrm{d}t.
	\end{array} \right.
\end{equation}

\begin{equation}\label{5.15}
	\left\{ \begin{array}{ll}
		& \dot{x}(t) = 2 x(t) + u_1(t) + u_2(t), x(0) = 1,\\
		& \overline{J_1}(u_1, u_2) = \frac{1}{2}\int_0^3 [1.2 x^2(t) + 0.6 u_1^2(t)] \mathrm{d}t,\\
		& \overline{J_2}(u_1, u_2) = \frac{1}{2}\int_0^3 [1.5 x^2(t) + 0.5 u_2^2(t)] \mathrm{d}t.
	\end{array} \right.
\end{equation}
\begin{figure}
	\centering
	\begin{minipage}[t]{0.49 \linewidth}
		\centering
		\includegraphics[width=0.85
		\linewidth]{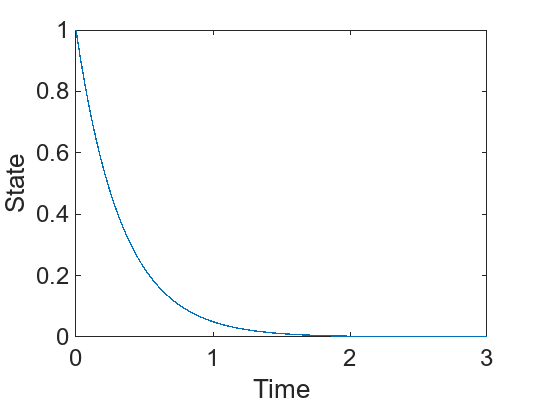}
		\caption*{\footnotesize{(a)\ The corresponding status}}
	\end{minipage}
	\begin{minipage}[t]{0.49 \linewidth}
		\centering
		\includegraphics[width=0.85\linewidth]{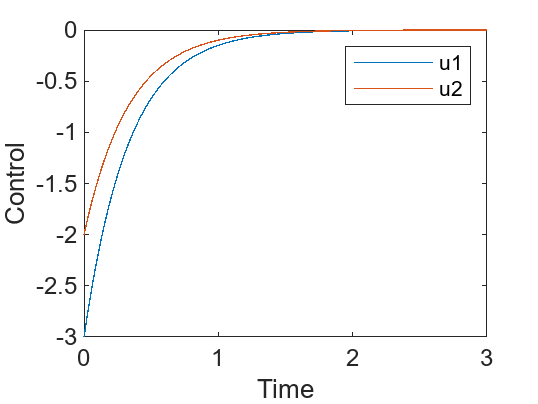}
		\caption*{\footnotesize{(b)\ Open-loop Pareto-Nash equilibrium solution}}
	\end{minipage}
	\centering
	\begin{minipage}[t]{0.49 \linewidth}
		\centering
		\includegraphics[width=0.85\linewidth]{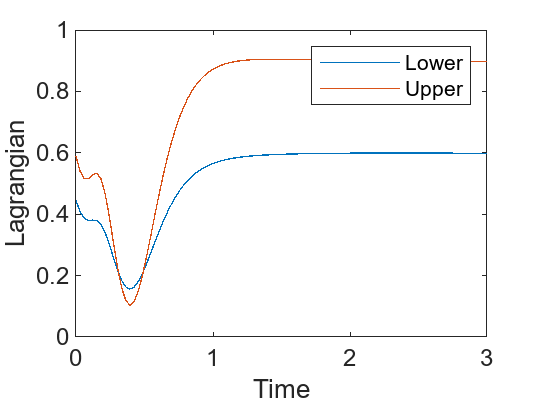}
		\caption*{\footnotesize{(c)\ The interval Lagrange function of Player 1}}
	\end{minipage}
	\begin{minipage}[t]{0.49 \linewidth}
		\centering
		\includegraphics[width=0.85\linewidth]{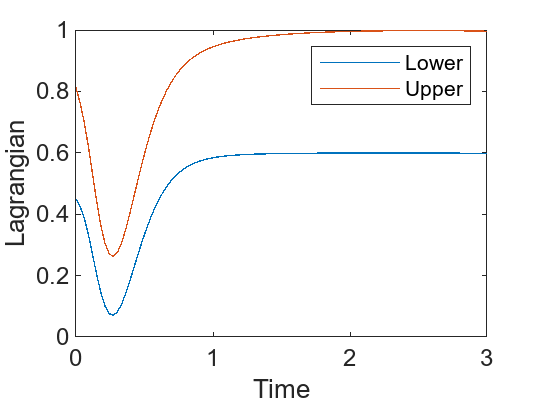}
		\caption*{\footnotesize{(d)\ The interval Lagrange function of Player 2}}
	\end{minipage}
	\caption{The linear quadratic differential game (\ref{5.12})}
	\label{fig:5.12}
\end{figure}

By calculation, we can obtain the open-loop Pareto-Nash equilibrium of the linear quadratic differential game (\ref{5.12}) as
\[
u^*_1(t) = \frac{3}{5 + e^{18}} (e^{3t} - e^{18 - 3t})
\]
and
\[ 
\ u^*_2(t) = \frac{2}{5 + e^{18}} (e^{3t} - e^{18 - 3t}).
\]
The corresponding status is
\[
x^*(t) = \frac{1}{5 + e^{18}} (5 e^{3t} + e^{18 - 3t}).
\]
At the current time, the interval Lagrange functions of Players 1 and 2 are as follows:
\[
L_1(x^*, u_1^*) =  \Big[\frac{1}{2} \big(0.9 x^{*2}(t) + 0.3 u_1^{*2}(t)\big), \frac{1}{2} \big(1.2 x^{*2}(t) + 0.6 u_1^{*2}(t)\big) \Big],
\]
\[
L_2(x^*, u_2^*) = \big[\frac{1}{2} \big(0.8 x^{*2}(t) + 0.4 u_2^{*2}(t)\big), \frac{1}{2} \big(1.5 x^{*2}(t) + 0.5 u_2^{*2}(t)\big) \big].
\]

The Pareto optimal payment interval of the two players can be obtained by calculating the integral.
\begin{equation*}
	\begin{split}
		J_1(u_1^*, u_2^*) & = \Big[\int_0^3 \frac{1}{2} \big\{0.9 x^{*2}(t) + 0.3 u_1^{*2}(t) \big\} \mathrm{d}t, \int_0^3 \frac{1}{2} \big\{1.2 x^{*2}(t) + 0.6 u_1^{*2}(t) \big\} \mathrm{d}t\Big]\\
		& = \frac{1}{(5 + e^{18})^2} \Big[ \frac{3}{10} e^{36} + \frac{36}{5} e^{18} - \frac{21}{10}, \frac{11}{20} e^{36} + \frac{21}{5} e^{18} - \frac{59}{20} \Big],
	\end{split}
\end{equation*}

\begin{equation*}
	\begin{split}
		J_2(u_1^*, u_2^*) & = \Big[\int_0^3 \frac{1}{2} \big\{0.8 x^{*2}(t) + 0.4 u_2^{*2}(t) \big\} \mathrm{d}t, \int_0^3 \frac{1}{2} \big\{1.5 x^{*2}(t) + 0.5 u_2^{*2}(t) \big\} \mathrm{d}t\Big]\\
		& = \frac{1}{(5 + e^{18})^2} \Big[\frac{1}{5} e^{36} + \frac{44}{5} e^{18} - \frac{9}{5}, \frac{7}{24} e^{36} + \frac{39}{2} e^{18} - \frac{79}{24} \Big].
	\end{split}
\end{equation*}

The four subfigures (a)-(d) in Figure \ref{fig:5.12} respectively illustrate the corresponding state curve of the linear quadratic differential game (5.12), the control curve of the open-loop Pareto-Nash equilibrium, and the curves of the interval Lagrange functions for both players.

\begin{figure}[htbp]
	\centering
	\begin{minipage}[t]{0.49 \linewidth}
		\centering
		\includegraphics[width=0.85\linewidth]{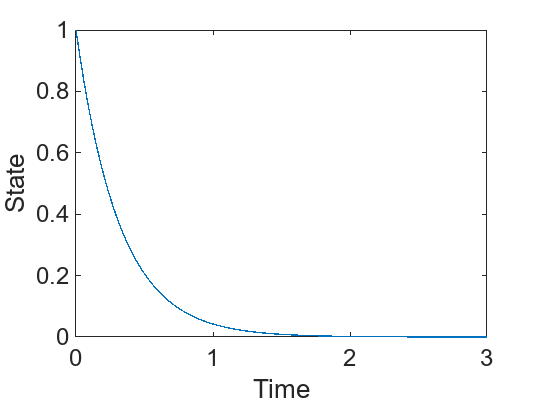}
		\caption*{\footnotesize{(a)\ Corresponding status}}
	\end{minipage}
	\begin{minipage}[t]{0.49 \linewidth}
		\centering
		\includegraphics[width=0.85\linewidth]{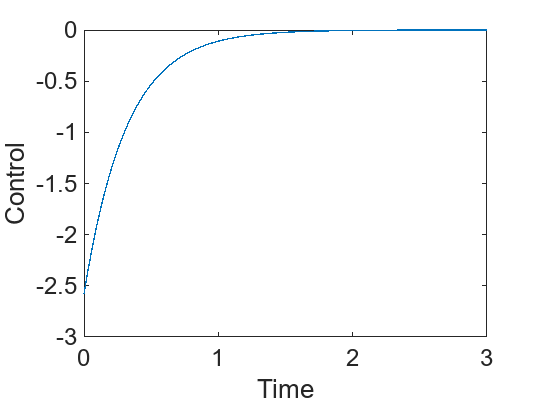}
		\caption*{\footnotesize{(b)\ Open-loop Pareto-Nash equilibrium solution}}
	\end{minipage}
	\centering
	\begin{minipage}[t]{0.49 \linewidth}
		\centering
		\includegraphics[width=0.85\linewidth]{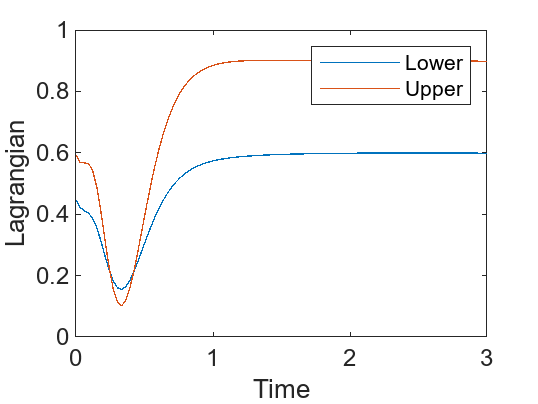}
		\caption*{\footnotesize{(c)\ Interval-valued Lagrange function of Player 1}}
	\end{minipage}
	\begin{minipage}[t]{0.49 \linewidth}
		\centering
		\includegraphics[width=0.85\linewidth]{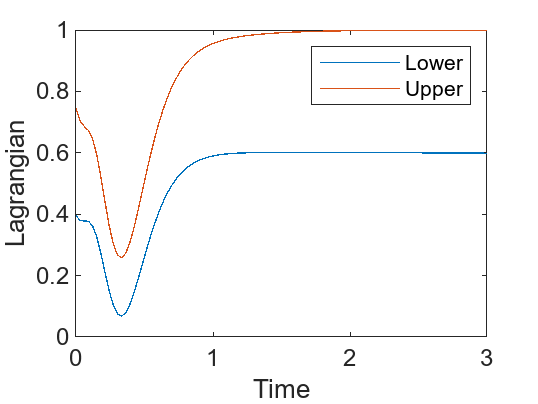}
		\caption*{\footnotesize{(d)\ Interval-valued Lagrange function of Player 2}}
	\end{minipage}
	\caption{Linear quadratic differential game (\ref{5.13})} 
	\label{fig:5.13}
\end{figure}

Similarly, the open-loop Pareto-Nash equilibrium of the linear quadratic differential game (\ref{5.13}) is solved as
\[
u^*_1(t) = u^*_2(t) = \frac{3}{K} \Big(e^{\sqrt{10}t} - e^{6\sqrt{10} - \sqrt{10}t}\Big),
\]
and the corresponding status is
\[
x^*(t) = \frac{1}{K} \Big[\big(\sqrt{10} + 2\big) e^{\sqrt{10}t} + \big(\sqrt{10} - 2\big) e^{6\sqrt{10} - \sqrt{10}t}\Big],
\]
where $K = \big(\sqrt{10} + 2\big) + \big(\sqrt{10} - 2\big) e^{6\sqrt{10}}$.

The payment intervals corresponding to Players 1 and 2 are as follows.
\begin{equation*}
		J_1(u_1^*, u_2^*) = \frac{3}{10} \frac{1}{K^2} \Big[33 e^{6\sqrt{10}} + \frac{3 \big(17 - 4\sqrt{10}\big)}{4\sqrt{10}} e^{12\sqrt{10}} - \frac{3 \big(17 + 4\sqrt{10}\big)}{4\sqrt{10}},  14 e^{6\sqrt{10}} + \frac{37 - 8\sqrt{10}}{2\sqrt{10}} e^{12\sqrt{10}} - \frac{37 + 8\sqrt{10}\big)}{2\sqrt{10}} \Big],
\end{equation*}
\begin{equation*}
		J_2(u_1^*, u_2^*) = \frac{1}{K^2} \Big[\frac{14}{5} e^{6\sqrt{10}} + \frac{37 - 8\sqrt{10}}{10\sqrt{10}} e^{12\sqrt{10}} - \frac{37 + 8\sqrt{10}\big)}{10\sqrt{10}}, 
		\frac{33}{2} e^{6\sqrt{10}} + \frac{37 - 8\sqrt{10}}{8\sqrt{10}} e^{12\sqrt{10}} - \frac{37 + 8\sqrt{10}\big)}{8\sqrt{10}} \Big].
\end{equation*}
\begin{figure}[htbp]
	\centering
	\begin{minipage}[t]{0.49 \linewidth}
		\centering
		\includegraphics[width=0.85\linewidth]{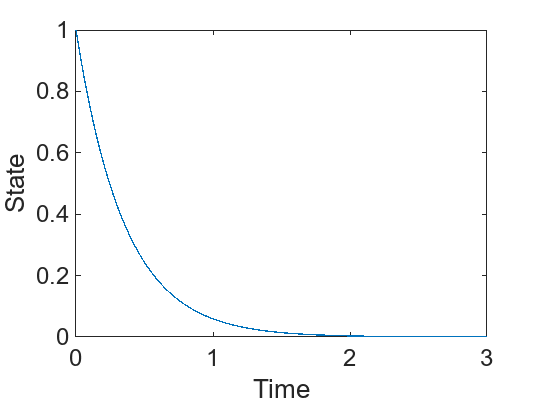}
		\caption*{\footnotesize{(a)\ Corresponding status}}
	\end{minipage}
	\begin{minipage}[t]{0.49 \linewidth}
		\centering
		\includegraphics[width=0.85\linewidth]{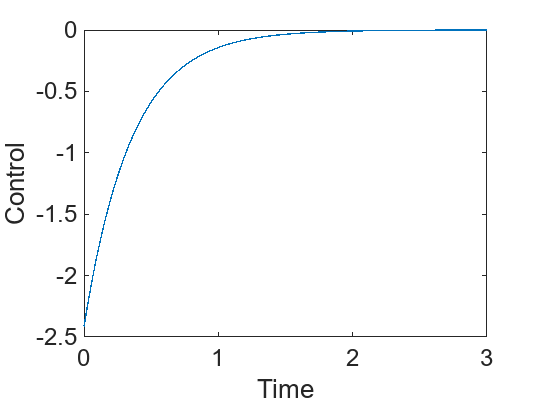}
		\caption*{\footnotesize{(b)\ Open-loop Pareto-Nash equilibrium solution}}
	\end{minipage}
	\centering
	\begin{minipage}[t]{0.49 \linewidth}
		\centering
		\includegraphics[width=0.85\linewidth]{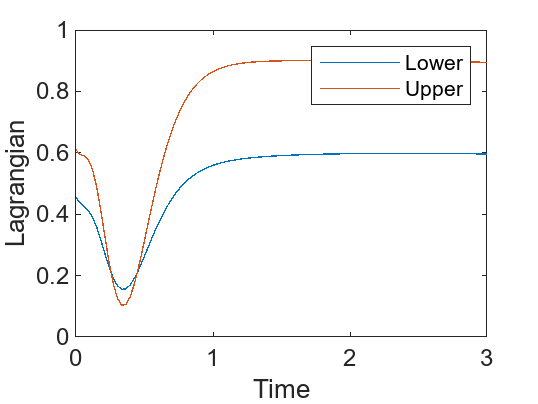}
		\caption*{\footnotesize{(c)\ Interval-valued Lagrange function of Player 1}}
	\end{minipage}
	\begin{minipage}[t]{0.49 \linewidth}
		\centering
		\includegraphics[width=0.85\linewidth]{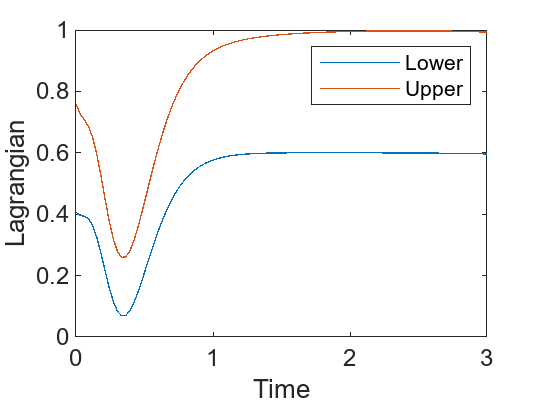}
		\caption*{\footnotesize{(d)\ Interval-valued Lagrange function of Player 2}}
	\end{minipage}
	\caption{Linear quadratic differential game (\ref{5.14})} 
	\label{fig:5.14}
\end{figure}

The open-loop Pareto-Nash equilibrium of the linear quadratic differential game (\ref{5.14}) is solved as
\[
u^*_1(t) = u^*_2(t) = \frac{2}{K} \Big(e^{\sqrt{8}t} - e^{6\sqrt{8} - \sqrt{8}t}\Big),
\]
and the corresponding status is
\[
x^*(t) = \frac{1}{K} \Big[\big(\sqrt{8} + 2\big) e^{\sqrt{8}t} + \big(\sqrt{8} - 2\big) e^{6\sqrt{8} - \sqrt{8}t}\Big]
\]
where $K = \big(\sqrt{8} + 2\big) + \big(\sqrt{8} - 2\big) e^{6\sqrt{8}}$.

The payment intervals corresponding to Players 1 and 2 are as follows.
\begin{equation*}
	\begin{split}
		J_1(u_1^*, u_2^*) = \frac{3}{5} \frac{1}{K^2} \Big[15 e^{6\sqrt{8}} + \frac{10 - 3\sqrt{8}}{2\sqrt{8}} e^{12\sqrt{8}} - \frac{10 + 3\sqrt{8}}{2\sqrt{8}}, 8 e^{6\sqrt{8}} + \frac{7 - 2\sqrt{8}}{\sqrt{8}} e^{12\sqrt{8}} - \frac{7 + 2\sqrt{8}}{\sqrt{8}} \Big],
	\end{split}
\end{equation*}
\begin{equation*}
	\begin{split}
		J_2(u_1^*, u_2^*) = \frac{1}{K^2} \Big[\frac{32}{5} e^{6\sqrt{8}} + \frac{14 - 4\sqrt{8}}{5\sqrt{8}} e^{12\sqrt{8}} - \frac{14 + 4\sqrt{8}}{5\sqrt{8}}, 7 e^{6\sqrt{8}} + \frac{10 - 3\sqrt{8}}{2\sqrt{8}} e^{12\sqrt{8}} - \frac{10 + 3\sqrt{8}}{2\sqrt{8}} \Big].
	\end{split}
\end{equation*}
\begin{figure}
	\centering
	\begin{minipage}[t]{0.49 \linewidth}
		\centering
		\includegraphics[width=0.85\linewidth]{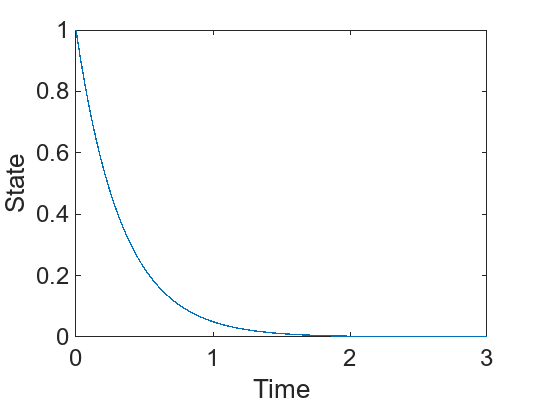}
		\caption*{\footnotesize{(a)\ Corresponding status}}
	\end{minipage}
	\begin{minipage}[t]{0.49 \linewidth}
		\centering
		\includegraphics[width=0.85\linewidth]{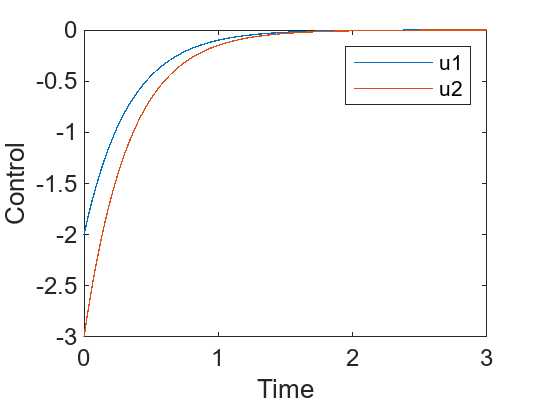}
		\caption*{\footnotesize{(b)\ Open-loop Pareto-Nash equilibrium solution}}
	\end{minipage}
	\centering
	\begin{minipage}[t]{0.49 \linewidth}
		\centering
		\includegraphics[width=0.85\linewidth]{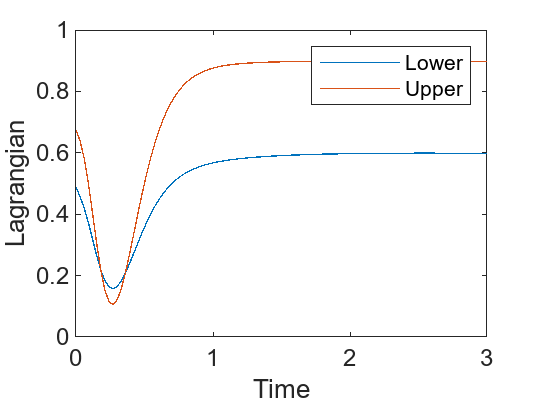}
		\caption*{\footnotesize{(c)\ Interval-valued Lagrange function of Player 1}}
	\end{minipage}
	\begin{minipage}[t]{0.49 \linewidth}
		\centering
		\includegraphics[width=0.85\linewidth]{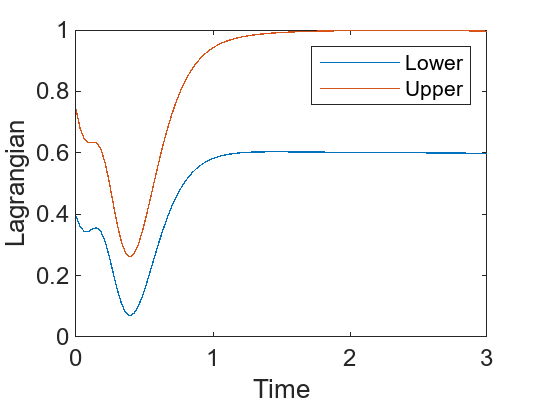}
		\caption*{\footnotesize{(d)\ Interval-valued Lagrange function of Player 2}}
	\end{minipage}
	\caption{Linear quadratic differential game (\ref{5.15})}
	\label{fig:5.15}
\end{figure}

The open-loop Pareto-Nash equilibrium of the linear quadratic differential game (\ref{5.15}) is $(u^*_1(t), u^*_2(t))$, where
\[
u^*_1(t) = \frac{2}{5 + e^{18}} (e^{3t} - e^{18 - 3t})
\ \mbox{and}\  
u^*_2(t) = \frac{3}{5 + e^{18}} (e^{3t} - e^{18 - 3t}).
\]
And the corresponding status is
\[
x^*(t) = \frac{1}{5 + e^{18}} (5 e^{3t} + e^{18 - 3t}).
\]

The payment intervals corresponding to Players 1 and 2 are as follows.
\begin{equation*}
	J_1(u_1^*, u_2^*) = \frac{3}{5} \frac{1}{(5 + e^{18})^2} \Big[\frac{39}{2} e^{18} + \frac{7}{24} e^{36} - \frac{79}{24},\  22 e^{18} + \frac{1}{2} e^{36} - \frac{9}{2} \Big],
\end{equation*}
\begin{equation*}
	J_2(u_1^*, u_2^*) = \frac{1}{(5 + e^{18})^2} \Big[\frac{14}{5} e^{18} + \frac{11}{30} e^{36} - \frac{59}{30},\  12 e^{18} + \frac{1}{2} e^{36} - \frac{7}{2} \Big].
\end{equation*}

The state curves of the linear quadratic differential game (\ref{5.13})-(\ref{5.15}), the control curves representing the open-loop Pareto-Nash equilibria, and the curves illustrating the interval Lagrange functions for both players are depicted in Figures \ref{fig:5.13}-\ref{fig:5.15}.
\end{example}

\vskip 25pt
\section{\bf Conclusions}
\vskip 10pt

The primary focus of this study is to analyze multi-objective interval differential games and their open-loop Pareto-Nash equilibrium. We consider the strategies of $n$ players as control processes, with each process located within a permissible control set that represents the strategy space. Consequently, the $n$-player multi-objective interval differential game can be viewed as a specialized form of an $n$-player multi-objective game with interval payoffs. Drawing from the literature \cite{LiLi}, we derive sufficient conditions for determining the existence of (weighted) open-loop Pareto-Nash equilibrium in multi-objective interval differential games. Additionally, by incorporating goal weights, we transform the $n$-player multi-goal interval differential game into an $n$-player single-goal interval differential game. For this modified version, we construct an interval-valued Hamiltonian function for each player to establish necessary conditions and sufficient conditions for identifying the open-loop Pareto-Nash equilibrium in the interval differential game. Finally, we convert the linear quadratic interval differential game into four classic linear quadratic differential games while providing a specific calculation method for determining its open-loop Pareto-Nash equilibrium.

This paper establishes the necessary and sufficient prerequisites exclusively for the existence of open-loop Pareto Nash equilibrium in interval differential games. Therefore, it is imperative to investigate the conditions for the existence of closed-loop Pareto-Nash equilibrium and feedback Pareto Nash equilibrium as well. Additionally, determining methodologies for computing closed-loop Pareto Nash equilibrium and feedback Pareto Nash equilibrium in linear quadratic interval differential games holds utmost importance. Furthermore, when addressing multi-objective differential games influenced by uncertain or imprecise information affecting both objective function and system state equation, integrating dynamic systems containing interval (fuzzy) dynamics into such interval (fuzzy) differential game models becomes indispensable. Our forthcoming research will primarily focus on establishing these types of differential game models along with their corresponding equilibrium conditions.


\begin{thebibliography}{999}
\bibitem{Isaacs}
R. Isaacs,  {\em Differential games: A mathematical theory with applications to warfare and pursuit, control and optimization},
\newblock  New York: John Wiley and Sons, Inc., 1965.


\bibitem{StarrHo}
\newblock A. W. Starr, Y. C. Ho, {Nonzero-sum differential games},
\newblock  {\it J. Optimiz. Theory App.},
\newblock  \textbf{3} (3) (1969) 184--206.

\bibitem{Dockner}
\newblock E. J. Dockner, S. Jorgensen, N. V. Long, et al., {\em Differential games in economics and management science},
\newblock  Cambridge University Press, 2000.

\bibitem{James}
M. R. James, J. S. Baras,  {Partially observed differential games, infinite-dimensional Hamilton-Jacobi-Isaacs equations, and nonlinear $H_{\infty}$ control},
\newblock  {\it SIAM J. Control Optim.},
\newblock  \textbf{34} (4) (1996) 1342--1364.

\bibitem{Limebeer}
D. Limebeer, B. Anderson, B. Hendel, {A Nash game approach to mixed $H_2 / H_{\infty}$ control},
\newblock  {\it IEEE T. Automat. Contr.},
\newblock  \textbf{39} (1) (1994) 69--82.

\bibitem{Mylvaganam}
T. Mylvaganam, A. Astolfi, {A Nash game approach to mixed $H_2 / H_{\infty}$ Control for input-affine nonlinear systems},
\newblock  {\it IFAC-PapersOnLine},
\newblock  \textbf{49} (18) (2016) 1024--1029.

\bibitem{Cappello}
D. Cappello, S. Garcin, Z. Mao, et al, {A hybrid controller for multi-agent collision avoidance via a differential game formulation},
\newblock  {\it IEEE T. Contr. Syst. T.},
\newblock  \textbf{29} (4) (2021) 1750--1757.

\bibitem{Mylvaganam}
T. Mylvaganam, M. Sassano, A. Astolfi, {A differential game approach to multi-agent collision avoidance},
\newblock  {\it IEEE T. Automat. Contr.},
\newblock  \textbf{62} (8) (2017) 4229--4235.

\bibitem{XueZhanWu}
W. Y. Xue, S. Y. Zhan, Z. H. Wu, et al., {Distributed multi-agent collision avoidance using robust differential game},
\newblock  {\it ISA Transactions.},
\newblock  \textbf{134} (2023) 95--107.

\bibitem{SaadHan}
W. Saad, Z. Han, H. V. Poor, et al., {Game theoretic methods for the smart grid: An overview of microgrid systems, demand-side management, and smart grid communications,},
\newblock  {\it IEEE Signal Proc. Mag.},
\newblock  \textbf{29} (5) (2012) 86--105.

\bibitem{MeiZhang}
S. W. Mei, D. Zhang, Y. Y. Wang, et al., {Robust optimization of static reserve planning with large-scale integration of wind power: A game theoretic approach},
\newblock  {\it IEEE T. Sustain. Energ.},
\newblock  \textbf{5} (2) (2014) 535--545.

\bibitem{Semsar-Kazerooni}
E. Semsar-Kazerooni, K. Khorasani, {Multi-agent team cooperation: A game theory approach},
\newblock  {\it Automatica},
\newblock  \textbf{45} (10) (2009) 2205--2213.

\bibitem{GaoPetrosyan}
H. W. Gao, L. Petrosyan, H. Qiao, et al., {Cooperation in two-stage games on undirected networks},
\newblock  {\it J. Syst. Sci. Complex.},
\newblock  \textbf{30} (3) (2017) 680--693.

\bibitem{BasarBernhard}
\newblock T. Basar, P. Bernhard, {\em $H_{\infty}$-optimal control and related minimax design problems: A dynamic game approach},
\newblock 2nd ed. Boston: Birkh\"auser, 1995.

\bibitem{Broek}
W. V. D. Broek, J. C. Engwerda, J. M. Schumacher, {Robust equilibria in indefinite linear-quadratic differential games},
\newblock  {\it J. Optimiz. Theory App.},
\newblock  \textbf{119} (3) (2003) 565--595.

\bibitem{XuMizukami}
H. Xu, K. Mizukami, {Linear-quadratic zero-sum differential games for generalized state space systems},
\newblock  {\it IEEE T. Automat. Contr.},
\newblock  \textbf{39} (1) (1994) 143--147.

\bibitem{BasarOlsder}
\newblock T. Basar, G. J. Olsder, {\em Dynamic non-cooperative game theory},
\newblock  Philadelphia: SIAM, 1999.

\bibitem{HuangZhao}
Y. B. Huang, J. Zhao, {Pareto efficiency of finite horizon switched linear quadratic differential games},
\newblock  {\it J. Syst. Sci. Complex.},
\newblock  \textbf{31} (1) (2018) 173--187.

\bibitem{Zhukovskiy}
V. I. Zhukovskiy, A. S. Gorbatov, K. N. Kudryavtsev, {Berge and Nash equilibria in a linear-quadratic differential game},
\newblock  {\it Automat. Rem. Contr+.},
\newblock  \textbf{81} (11) (2020) 62--94.

\bibitem{Basar}
T. Basar, {On the existence and uniqueness of closed-loop sampled-data Nash controls in linear-quadratic stochastic differential games},
\newblock  {\it Lect. Notes Contr. Inf.},
\newblock  \textbf{22} (4) (1980) 193--203.

\bibitem{Hamadene1998}
S. Hamad$\grave{e}$ne, {Backward-forward sde’s and stochastic differential games},
\newblock  {\it Stoch. Proc. Appl.},
\newblock  \textbf{77} (1) (1998) 1--15.

\bibitem{Hamadene1999}
S. Hamad$\grave{e}$ne, {Nonzero sum linear-quadratic stochastic differential games and backward-forward equations},
\newblock  {\it Stoch. Anal. Appl.},
\newblock  \textbf{17} (1) (1999) 117--130. 

\bibitem{HamadeneMu}
S. Hamad$\grave{e}$ne, R. Mu, {Existence of Nash equilibrium points for Markovian nonzero-sum stochastic differential games with unbounded coefficients},
\newblock  {\it Stochastics.},
\newblock  \textbf{87} (1) (2015) 85--111.

\bibitem{SunYong}
J. R. Sun, J. M. Yong, S. G. Zhang, {Linear quadratic stochastic two-person zero sum differential games in an infinite horizon},
\newblock  {\it ESAIM Contr. Optim. Ca.},
\newblock  \textbf{22} (3) (2016) 743--769.

\bibitem{Zadeh}
L. A. Zadeh, {Fuzzy sets},
\newblock  {\it Information \& Control.},
\newblock  \textbf{8} (3) (1965) 338--353.

\bibitem{LiuLiu}%
B. Liu, Y. Liu, {Expected value of fuzzy variable and fuzzy expected value models},
\newblock  {\it IEEE Trans. Fuzzy Syst.},
\newblock  \textbf{10} (4) (2002) 445--450.

\bibitem{Liu2007}%
B. Liu, {\em Uncertainty Theory},
\newblock  2nd ed., Berlin: Springer-Verlag, 2007.

\bibitem{Liu2008}
B. Liu, {Fuzzy process, hybrid process and uncertain process},
\newblock  {\it J. Uncertain Syst.},
\newblock  \textbf{2}  (1) (2008) 3--16. 

\bibitem{ChenQin}
X. Chen, Z. Qin, {A new existence and uniqueness theorem for fuzzy differential equations},
\newblock  {\it Int. J. Fuzzy Syst.},
\newblock  \textbf{13} (2) (2011) 148--151.

\bibitem{YouWang}
C. You, G. Wang, {Existence and uniqueness theorem for linear fuzzy differential equations},
\newblock  {\it East Asian Math. J.},
\newblock  \textbf{27} (3) (2011) 289--297.

\bibitem{Zhu}
Y. Zhu, {A fuzzy optimal control model},
\newblock  {\it J. Uncertain Syst.},
\newblock  \textbf{3} (2009) 270--279.

\bibitem{ZhaoZhu}
Y. Zhao, Y. Zhu, {Fuzzy optimal control of linear quadratic models},
\newblock  {\it Comput. Math. Appl.},
\newblock  \textbf{60} (2010) 67--73.

\bibitem{QinBai}
Z. Qin, M. Bai, D. Ralescu, {A fuzzy control system with application to production planning problems},
\newblock  {\it Inf. Sci.},
\newblock  \textbf{181} (2011) 1018--1027.

\bibitem{BatenKamil}
M. Baten, A. Kamil, {Fuzzy optimal control with application to discounted proﬁt advertising problem},
\newblock  {\it J. Intell. Fuzzy Syst.},
\newblock  \textbf{23} (2012) 187--192. 

\bibitem{HuYang}
Z. Hu, Y. Yang, {Existence and generic stability of open-loop Nash equilibria for noncooperative fuzzy differential games},
\newblock  {\it Appl. Math. and Comput.},
\newblock  \textbf{463} (2024) 128371.

\bibitem{Moore}
\newblock R. E. Moore, {\em Interval Analysis},
\newblock Englewood Cliffs, New Jersey: Prentice-Hall, 1966.

\bibitem{Alefeld}
G. Alefeld, G. Mayer, {Interval analysis: theory and applications},
\newblock  {\it J. Comput. Appl. Math.},
\newblock  \textbf{121} (1-2) (2000) 421--464.

\bibitem{Chalco-Cano}
Y. Chalco-Cano, A. Rufi\'an-Lizana, H. Rom\'an-Flores, M. D. Jim\'enez-Gamero, {Calculus for interval-valued functions using generalized Hukuhara derivative and applications},
\newblock  {\it Fuzzy Set Syst.},
\newblock  \textbf{219} (2013) 49--67.

\bibitem{Oppenheimer}
E. P. Oppenheimer, A. N. Michel, {Application of interval analysis techniques to linear systems. Part I: fundamental results},
\newblock  {\it IEEE Trans. Circ. Syst.},
\newblock  \textbf{35} (9) (1988) 1129--1138.

\bibitem{Hukuhara}
M. Hukuhara, {Int\'egration des applications measurables dont lavaleurest uncompact convexe},
\newblock  {\it Funkc. Ekvacioj},
\newblock  \textbf{10} (1967) 205--223.

\bibitem{Stefanini}
L. Stefanini, B. Bede, {Generalized Hukuhara differentiability of interval-valued functions and interval differential equations},
\newblock  {\it Nonlinear Anal. TMA},
\newblock  \textbf{71} (3-4) (2009) 1311--1328.

\bibitem{Antczak}
T. Antczak, {Exactness property of the exact absolute value penalty function method for solving convex nondifferentiable interval-valued optimization problems},
\newblock  {\it J. Optim. Theory Appl.},
\newblock  \textbf{176} (1) (2018) 205--224.

\bibitem{Ghosh2020}
D. Ghosh, R. S. Chauhan, R. Mesiar, A. K. Debnath, {Generalized Hukuhara G\^ateaux and Fr\'echet derivatives of interval-valued functions and their application in optimization with interval-valued functions},
\newblock  {\it Inform. Sciences},
\newblock  \textbf{510} (2020) 317--340.

\bibitem{Ghosh2022}
D. Ghosh, A. K. Debnath, R. S. Chauhan, O. Castillo, {Generalized-Hukuhara-gradient efficient-direction method to solve optimization problems with interval-valued functions and its application in least-squares problems},
\newblock  {\it Int. J. Fuzzy Syst.},
\newblock  \textbf{24} (3) (2022) 1275--1300.

\bibitem{Li2023}
G. Alefeld, G. Mayer, {Optimality conditions for nonlinear optimization problems with interval-valued objective function in admissible orders},
\newblock  {\it Fuzzy Optim. Decis. Ma.},
\newblock  \textbf{22} (2023) 247--265.

\bibitem{Leal}
U. Leal, W. Lodwick, G. Silva, et al., {Interval optimal control for uncertain problems},
\newblock  {\it Fuzzy Set. Syst.},
\newblock  \textbf{402} (2021) 142--154.

\bibitem{Lupulescu}
V. Lupulescu, {Hukuhara differentiability of interval-valued functions and interval differential equations on time scales},
\newblock  {\it Inform. Sciences},
\newblock  \textbf{248} (2013) 50--67.

\bibitem{TaoZhang}
J. Tao, Z. Zhang, {Properties of interval-valued function space under the $gH$-difference and their application to semi-linear interval differential equations},
\newblock  {\it Adv. Differ. Equ.},
\newblock  \textbf{2016} (1) (2016) 45.

\bibitem{LiLi}
W. Li, D. Y. Li, Y. Q. Feng, D. Zou, {Weak Pareto-Nash equilibria of generalized interval-valued multi-objective games with fuzzy mappings},
\newblock  unpublished work.

\bibitem{Glicksberg}
I. L. Glicksberg, {A further generalization of the Kakutani fixed point theorem with applications to Nash equilibrium points},
\newblock  {\it Proc. Amer. Math. Soc.},
\newblock  \textbf{3} (1)(1952) 170--174.

\bibitem{Fan}
K. Fan, {Fixed point and minimax theorems in locally convex topological linear spaces},
\newblock  {\it Proc. Natl. Acad. Sci. U.S.A.},
\newblock  \textbf{38} (2) (1952) 121--126.

	
\end{thebibliography}
\end{document}